\DeclareMathOperator{\diam}{diam}
\title{Penalization of barycenters in the Wasserstein space}
\author{J\'er\'emie Bigot\footnote{J. Bigot is a member of Institut Universitaire de France.}, Elsa Cazelles \& Nicolas Papadakis \footnote{This work has been carried out with financial support from the French
State, managed by the French National Research Agency (ANR) in the frame
of the GOTMI project (ANR-16-CE33-0010-01). }  \\
\\  Institut de Math\'ematiques de Bordeaux et CNRS  (UMR 5251)   \\ Universit\'e de Bordeaux }
\begin{document}

\theoremstyle{plain}
\newtheorem{thm}{Theorem}[section]
\theoremstyle{plain}
\newtheorem{prop}{Properties}[section]
\theoremstyle{plain}
\newtheorem{hyp}{Assumption}[section]
\theoremstyle{plain}
\newtheorem{proposition}{Proposition}[section]
\theoremstyle{definition}
\newtheorem{defi}[thm]{Definition}
\theoremstyle{plain}
\newtheorem{lemma}[thm]{Lemma}
\newtheorem{cor}[thm]{Corollary}
\newtheorem{rmq}[thm]{Remark}
\theoremstyle{definition}
\newtheorem{ex}{Example}[section]
\newtheorem{appli}{Application}[section]

\newcommand{\E}{{\mathbb E}}
\newcommand{\R}{{\mathbb R}}
\newcommand{\N}{{\mathbb N}}
\newcommand{\Z}{{\mathbb Z}}
\renewcommand{\P}{{\mathbb P}}
\newcommand{\G}{{\mathbb G}}

\newcommand{\PP}{{\mathcal P}}
\newcommand{\HH}{{\mathcal H}}
\newcommand{\DD}{{\mathcal D}}
\newcommand{\BB}{{\mathcal B}}
\newcommand{\MM}{{\mathcal M}}

\newcommand{\bX}{\boldsymbol{X}}
\newcommand{\bY}{\boldsymbol{Y}}
\newcommand{\bnu}{\boldsymbol{\nu}}
\newcommand{\bmu}{\boldsymbol{\mu}}
\newcommand{\boldeta}{\boldsymbol{\eta}}
\newcommand{\bsigma}{\boldsymbol{\sigma}}
\newcommand{\bfun}{\boldsymbol{f}}

\numberwithin{equation}{section}

\maketitle

\thispagestyle{empty}

\begin{abstract}
In this paper, a regularization of  Wasserstein barycenters for random measures supported on $\R^{d}$ is introduced via convex penalization. The existence and uniqueness of such barycenters is first proved for a large class of penalization functions. The Bregman divergence associated to the penalization term is then considered to obtain a stability result on penalized barycenters.
This allows the comparison of data made of $n$ absolutely continuous probability measures, within the more realistic setting where one only  has  access to a dataset of random variables sampled from unknown distributions. The  convergence  of the penalized empirical barycenter of a set of $n$  iid random probability measures towards its population counterpart is finally analyzed.
This approach is  shown to be appropriate for the statistical analysis of  either discrete or absolutely continuous random measures. It also allows to construct, from a set of discrete measures, consistent estimators of population Wasserstein barycenters that are absolutely continuous.
\end{abstract}

%

\section{Introduction}

In this work, we consider the Wasserstein distance $W_2$ associated to the quadratic cost for the comparison of probability measures (see e.g.\  \cite{villani2003topics} for a thorough introduction on the topic of Wasserstein spaces and optimal transport). Let $\Omega$ be a convex subset of $\R^d$ and  $\PP_2(\Omega)$ be the set of probability measures supported on $\Omega$ with finite second order moment. As introduced in \cite{agueh2011barycenters}, an empirical Wasserstein barycenter  of set of $n$ probability measures $\nu_{1},\ldots,\nu_{n}$ (not necessarily random) in $\PP_2(\Omega)$ is defined as a minimizer of
\begin{equation}
\mu \longmapsto  \frac{1}{n} \sum_{i=1}^{n} W_2^2(\mu,\nu_{i}), \mbox{ over }  \mu \in \PP_2(\Omega). \label{eq:pbempBar}
\end{equation}
The Wasserstein barycenter  corresponds to the notion of empirical Fr\'echet mean \cite{fre} that is an extension of the usual Euclidean barycenter to nonlinear metric spaces.

More generally, by introducing a probability distribution $\P$ on the space of probability measures $\PP_2(\Omega)$, we can define  a Wasserstein barycenter $\mu_{\P}$ of the distribution $\P$ as
\begin{equation}
\mu_{\P} \in \underset{\mu\in\PP_2(\Omega)}{\text{argmin}}\ \int_{\PP_2(\Omega)}W_2^2(\mu,\nu)d\P(\nu).  \label{eq:pbpopBar}
\end{equation}
For a discrete distribution  $\P_{n} =\frac{1}{n}\sum\delta_{\nu_i}$ on $\PP_2(\Omega)$, one has that $\mu_{\P_{n}}$ corresponds to the empirical Wasserstein barycenter defined in \eqref{eq:pbempBar}. In the setting where $\bnu_{1},\ldots,\bnu_{n}$ are independent and identically distributed (iid) random probability measures sampled from a distribution $\P$, the barycenter $\mu_{\P}$ is referred to as the population counterpart of  $\mu_{\P_{n}}$.

In this work, for the purpose of obtaining a regularized Wasserstein barycenter, we introduce a  convex penalty function $E$ in the optimization problem \eqref{eq:pbpopBar}   by considering the convex minimization problem
\begin{equation}\label{penalized_problem}
\min_{\mu\in \PP_2(\Omega)} \ \int W_2^2(\mu,\nu)d\P(\nu)+\gamma E(\mu)
\end{equation}
where $\gamma>0$ is a penalization parameter and $\P$ is any distribution on $\PP_2(\Omega)$ (possibly discrete or not). A first contribution of this paper is then to prove the existence and uniqueness of the minimizers of \eqref{penalized_problem}, called penalized (or regularized) Wasserstein barycenters, for a large class of penalization functions, and to study their stability with respect to the distribution $\P$.

 Introducing a penalization term in the definition \eqref{eq:pbempBar} of an empirical Wasserstein barycenter   is a way to incorporate some priori knowledge on the behavior of its population counterpart. In particular, we are interested in the case where   the dataset at hand is composed of $n$   discrete measures $\bnu_{p_{1}},\ldots,\bnu_{p_{n}}$ obtained from random observations
\begin{equation}
\bX = (\bX_{i,j})_{1 \leq i \leq n; \; 1 \leq j \leq p_{i} }, \label{eq:bX}
\end{equation}
organized in the form of $n$ experimental units, such that $\bnu_{p_{i}}$ is defined  by 
\begin{equation}
\bnu_{p_{i}} = \frac{1}{p_{i}} \sum_{j=1}^{p_{i}} \delta_{\bX_{i,j}}. \label{eq:discrete}
\end{equation}
 Typically,  for each $i=1,\ldots,n$, the random variables $\bX_{i,1},\ldots,\bX_{i,p_{i}}$ are iid observations in $\R^{d}$ generated from an absolutely continuous (a.c.) measure $\bnu_{i}$ (that is also random). In the paper, absolute continuity is always understood with respect to the Lebesgue measure $dx$ on $\R^{d}$.

 In this setting, an empirical Wasserstein barycenter of the discrete measures $\bnu_{p_{1}},\ldots,\bnu_{p_{n}}$ is generally irregular (and even not unique). Moreover, it poorly represents the Wasserstein barycenter of the  a.c.\ measures  $(\bnu_i)_{i=1,\ldots,n}$ (which is unique and smooth as proved in \cite{agueh2011barycenters}). A second contribution of this paper is then to show that introducing a penalization term in the computation of Wasserstein barycenters of  discrete measures allows to construct a consistent estimator of an a.c.\ population barycenter in the asymptotic setting where both $n$ and $\min_{1 \leq i \leq n} p_{i}$ tend to infinity.

Let us underline that we mainly focus on penalization functions $E$ that enforce  the minimizer of \eqref{eq:pbpopBar} to be an a.c.\ measure with a smooth probability density function (pdf). 
In this case, we propose to control the penalized Wasserstein barycenter 
in term of the  Bregman divergence associated to the penalty function $E$. 
Bregman divergences  have been proved to be relevant  measures of discrepancy between a.c.\ probability measures (e.g.\  in information geometry \cite{Amari2000}). Their constructions also cover a large range of known divergences for different penalty functions. It is therefore natural to use Bregman divergences to  compare penalized empirical Wasserstein barycenters of discrete measures with an a.c.\ population barycenter. To the best of our knowledge, this has not been considered so far.



\newpage
\subsection{Related work in the literature}

\paragraph*{Statistical inference using optimal transport.} 
The penalized barycenter problem is motivated by the nonparametric method introduced in \cite{burger2012regularized} for the classical density estimation problem from discrete samples.  It is based on a variational regularization approach involving the Wasserstein distance as a data fidelity term. However, the adaptation of this work for the penalization of Wasserstein barycenter has not been considered so far. 

\paragraph*{Consistent estimators of population Wasserstein barycenters.}
Tools from optimal transport are used in \cite{Pana15} for the registration of multiple point processes which represent repeated observations organized in samples from independent subjects or experimental units. The authors  in \cite{Pana15} proposed a consistent estimator of the 
population Wasserstein barycenter of multiple point processes in the case $d=1$, and an extension of their methodology for $d \geq 2$ is considered in \cite{Pana17}. Their method contains two steps. A kernel smoothing is first applied to the data  which leads to a set of  a.c.\ measures from which an empirical Wasserstein barycenter is computed in a second step. Our approach thus differs from  \cite{Pana15,Pana17}  since  we directly include the  smoothing step  in the computation of a Wasserstein barycenter via  the penalty function $E$ in \eqref{penalized_problem}.   Also notice that estimators of population Wasserstein barycenter are shown to be consistent  for the Wasserstein metric $W_2$ in   \cite{Pana15,Pana17}, whereas we  prove the consistency of our approach for metrics in the space of pdf supported on $\R^{d}$. Finally,  rates of convergence (for the Wasserstein metric $W_2$) of empirical Wasserstein barycenters computed from discrete measures, supported on the real line only, are discussed in \cite{Pana15,BGKL18}.


\paragraph*{Generalized notions of Wasserstein barycenters.}

A detailed characterization of empirical Wasserstein barycenters  in terms of existence, uniqueness and regularity for probability measures with support in $\R^d$ is given in  the seminal paper \cite{agueh2011barycenters}. The relation of such barycenters with the solution of the multi-marginal problem is also studied in both \cite{agueh2011barycenters} and \cite{pass2013optimal}.
The notion of Wasserstein barycenter has  been first generalized  in \cite{gouic2015existence}, by establishing  existence, uniqueness and consistency for random probability measures  supported on a locally compact geodesic space.
The more general case of probability measures supported on  Riemannian manifolds  has then been  studied in \cite{KimPass}. Subsequently, trimmed barycenters in the Wasserstein space  $\PP_2(\R^{d})$ have been introduced in  \cite{alvarez2015wide} for the purpose of  combining informations from different experimental units in a parallelized or distributed estimation setting. The framework of optimal transport has been recently  adapted for nonnegative measures supported on a compact subset in $\R^d$ with different masses, independently by \cite{chizat2016interpolating} and \cite{figalli2010new}.
However, in all these papers, incorporating regularization into the computation of Wasserstein barycenters has not been considered, which is of interest when the data are irregular probability measures.  

\paragraph*{Penalization of the transport map.} Alternatively, regularized barycenters may be obtained  by  adding a convex regularization on optimal  transport plans (that is on $\pi$ in \eqref{W_2}) when computing the Wasserstein distance between probability measures. This approach leads to the notion of regularized transportation problems and Wasserstein costs. It has  recently gained popularity in the literature of image processing and machine learning and has been considered to compute smoothed Wasserstein barycenters of discrete measures  \cite{ferradans2014regularized,cuturi2013fast}. 
Such penalizations acting on transport plans nevertheless have an indirect influence on the regularity of the Wasserstein barycenter. 
 
\subsection{Contributions and structure of the paper} \label{sec:main}

The results of the paper are organized as follows.

\begin{enumerate}
\item[-] In Section \ref{sec:def}, we introduce the main notations and definitions, and we present a key result called subgradient's inequality on which a large part of the developments in the paper lean.
\item[-] In Section \ref{sec:exist}, we analyze the existence, uniqueness and stability of penalized Wasserstein barycenters \eqref{penalized_problem} for various  penalty functions $E$, any  parameter $\gamma>0$ and for either a discrete distribution $\P_n$ supported on $\PP_2(\Omega)$ or its population counterpart $\P$. We also prove a stability result allowing to compare the case of data made of $n$ a.c.\ probability measures $\nu_{1},\ldots,\nu_{n}$, with the more realistic setting where we have only access to a dataset of random variables as in \eqref{eq:bX}.
\item[-] In Section \ref{sec:conv},  we derive 
convergence properties of empirical penalized barycenters toward their population counterpart in the asymptotic setting where the number of measures $n$ tends to infinity. 
These convergence results are obtained with respect to the Bregman divergence associated to the penalization function of the minimization problem \eqref{penalized_problem}. 
In this context, we demonstrate  that the bias term (as classically referred to in nonparametric statistics) converges to zero when $\gamma \to 0$. We also show (for $d=1$ and with additional regularity assumptions for $d\geq 2$) that  the variance term converges to $0$ when $\gamma = \gamma_n$ is let going to zero and $\lim_{n\to\infty} \gamma_n^2n=+\infty$. 

\item[-] In Section \ref{sec:discuss}, we conclude the paper by a discussion on the consistency of our approach when the data at hand are discrete measures as in \eqref{eq:discrete}.

\item[-] The proofs of the main results are gathered in Appendix \ref{App_proof}. 
\end{enumerate}

Finally, it should be mentioned that the computational aspects on the numerical approximation of penalized Wasserstein barycenter (as introduced in this work)  are the subject of the companion paper \cite{BCP18}, where we also tackle the issue of choosing the regularization parameter $\gamma$ in \eqref{penalized_problem}. In \cite{BCP18}, we also discuss the consistency of smooth Wasserstein barycenters as proposed in \cite{cuturi2013fast} using entropically regularized optimal transport. 

\section{Definitions, notation and first results} \label{sec:def}

\subsection{Wasserstein distance, Kantorovich's duality and Wasserstein bary\-centers}

For $\Omega$ a convex subset of $\R^d$, we denote by $\MM(\Omega)$ the space of bounded Radon measures on $\Omega$ and by $\PP_2(\Omega)$ the set of probability measures over $(\Omega,\mathcal{B}(\Omega))$ with finite second order moment, where $\mathcal{B}(\Omega)$ is the $\sigma$-algebra of Borel subsets of $\Omega$. In particular, $\PP_2(\Omega)\subset\MM(\Omega)$.
\begin{defi}
The \textit{Wasserstein distance} $W_2(\mu,\nu)$ is defined for $\mu,\nu\in\PP_2(\Omega)$ by
\begin{equation}
\label{W_2}
W_2^2(\mu,\nu)=\underset{\pi}{\inf}\iint_{\Omega\times\Omega}\vert x-y\vert^2d\pi(x,y),
\end{equation}
where the infimum is taken over all probability measures $\pi$ on the product space $\Omega\times\Omega$ with respective marginals $\mu$ and $\nu$. 
\end{defi}
The well known Kantorovich's duality theorem leads to another formulation of the Wasserstein distance.
\begin{thm}[Kantorovich's duality]
\label{Kantorovich}
For any $\mu,\nu\in\PP_2(\Omega)$, one has that
\begin{equation}
\label{W_2_kanto}
W_2^2(\mu,\nu)=\underset{(\phi,\psi)\in C_W}{\sup}\int_{\Omega}\phi(x)d\mu(x)+\int_{\Omega}\psi(y)d\nu(y),
\end{equation}
where $C_W$ is the set of all measurable functions $(\phi,\psi)\in\mathbb{L}_1(\mu)\times\mathbb{L}_1(\nu)$ satisfying \begin{equation}
\phi(x)+\psi(y)\le\vert x-y\vert^2, \label{eq:condW}
\end{equation}
for $\mu$-almost every $x\in\Omega$ and $\nu$-almost every $y\in\Omega$. 
\end{thm}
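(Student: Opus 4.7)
The plan is to establish the two inequalities separately. The easy direction (weak duality) is essentially a one-line integration argument. For any admissible pair $(\phi,\psi) \in C_W$ and any coupling $\pi$ with marginals $\mu$ and $\nu$, integrating the pointwise inequality $\phi(x)+\psi(y) \le |x-y|^2$ against $\pi$ yields
\begin{equation*}
\int_{\Omega}\phi\, d\mu + \int_{\Omega}\psi\, d\nu \;=\; \iint_{\Omega\times\Omega}(\phi(x)+\psi(y))\, d\pi(x,y) \;\le\; \iint_{\Omega\times\Omega}|x-y|^2 d\pi(x,y).
\end{equation*}
Taking the infimum over admissible $\pi$ on the right and then the supremum over $(\phi,\psi) \in C_W$ on the left gives $\sup \le W_2^2(\mu,\nu)$. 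The measurability and integrability assumptions in the definition of $C_W$ ensure that every quantity above is well-defined, since $\phi \in \mathbb{L}_1(\mu)$ and $\psi \in \mathbb{L}_1(\nu)$ imply that $(\phi\oplus\psi) \in \mathbb{L}_1(\pi)$ by Fubini.

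For the reverse inequality (strong duality), the natural approach is a convex duality argument via Fenchel--Rockafellar. Assume first $\Omega$ compact; on the Banach space $X = C_b(\Omega\times\Omega)$ consider the convex functionals
\begin{equation*}
\Theta_1(u) = \begin{cases} 0 & \text{if } u(x,y) \le |x-y|^2 \\ +\infty & \text{otherwise}, \end{cases} \qquad \Theta_2(u) = \begin{cases} -\int \phi\, d\mu -\int \psi\, d\nu & \text{if } u = \phi\oplus\psi \\ +\infty & \text{otherwise}. \end{cases}
\end{equation*}
Both are proper convex; $\Theta_1$ is continuous at the point $u\equiv -1$, where $\Theta_2$ is finite, so the Fenchel--Rockafellar qualification is satisfied. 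Computing the Legendre transforms $\Theta_i^*$ on the dual $\mathcal{M}(\Omega\times\Omega)$ identifies $\Theta_1^*(-\pi) + \Theta_2^*(\pi)$ with $\iint |x-y|^2 d\pi$ when $\pi \ge 0$ has marginals $\mu$ and $\nu$ (and $+\infty$ otherwise). The duality formula $\inf_u \{\Theta_1(u)+\Theta_2(u)\} = \max_{\pi}\{-\Theta_1^*(-\pi)-\Theta_2^*(\pi)\}$ then reads exactly $-\sup_{(\phi,\psi)\in C_W}\bigl(\int\phi\,d\mu+\int\psi\,d\nu\bigr) = -W_2^2(\mu,\nu)$, i.e.\ the desired equality.

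The main obstacle is the non-compact setting: for $\Omega \subset \R^d$ unbounded, the cost $|x-y|^2$ is unbounded and $C_b$ duality does not apply directly. The standard remedy is to truncate using $c_M(x,y) = \min(|x-y|^2, M)$ on a bounded exhaustion $\Omega_R \uparrow \Omega$, invoke the compact-case duality to obtain optimal $(\phi_M, \psi_M)$, and then pass to the limit $M,R \to \infty$. The finite second moment hypothesis $\mu,\nu \in \PP_2(\Omega)$ is crucial here to furnish the uniform integrability of $|x-y|^2$ with respect to any coupling, so that monotone/dominated convergence carries the primal values through and the dual potentials (which can be taken to be $c$-conjugate and thus uniformly controlled by $|x|^2$ and $|y|^2$) remain in $\mathbb{L}_1(\mu) \times \mathbb{L}_1(\nu)$ in the limit. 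A secondary subtlety is that the "a.e." in the definition of $C_W$ is slightly weaker than pointwise; this is harmless since any a.e.\ admissible pair can be modified on a null set to be admissible pointwise without changing the integrals.
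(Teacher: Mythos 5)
The paper does not prove this theorem: it is quoted as the classical Kantorovich duality and referred to \cite{villani2003topics,villani2008optimal}, so there is no internal proof to compare against. Your outline is essentially the classical proof from that cited source (weak duality by integrating $\phi\oplus\psi\le |x-y|^2$ against any coupling, strong duality by Fenchel--Rockafellar on $C_b$ in the compact case, then truncation/exhaustion for the general case), and it is correct in its main steps; the constraint-qualification point ($\Theta_1$ continuous at a point where $\Theta_2$ is finite) and the remark that the a.e.\ constraint is harmless are exactly the right things to check. The only thin spot is the limiting step in the unbounded setting: you do not need to extract and control optimal potentials $(\phi_M,\psi_M)$ in the limit (that is the delicate route, requiring $c$-concavity normalizations and compactness of potentials, and it proves attainment, which the statement does not ask for). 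It is cleaner to observe that any pair admissible for the truncated cost $c_M=\min(|x-y|^2,M)$ satisfies $\phi\oplus\psi\le c_M\le |x-y|^2$ and, being bounded and measurable, lies in $C_W$; hence $\sup_{C_W}\ge$ (truncated dual value) $=$ (truncated primal value) by the compact/bounded-cost case, and one concludes by showing the truncated optimal costs increase to $W_2^2(\mu,\nu)$ (monotone convergence along a tight sequence of optimal plans, using the finite second moments). With that adjustment the argument is complete and matches the proof in the reference the paper relies on.
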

For a detailed presentation of the Wasserstein distance and Kantorovich's duality, we refer to  \cite{villani2003topics,villani2008optimal}. 
For  $\mu, \nu \in \PP_2(\Omega)$, we denote by $\pi^{\mu,\nu}$ an \textit{optimal transport plan}, that is a solution of \eqref{W_2} satisfying $W_2^2(\mu,\nu)=\iint\vert x-y\vert^2d\pi^{\mu,\nu}(x,y)$. Likewise a pair $(\phi^{\mu,\nu},\psi^{\mu,\nu})\in\mathbb{L}_1(d\mu)\times\mathbb{L}_1(d\nu)$ achieving the supremum in \eqref{W_2_kanto} (under the constraint  $\phi^{\mu,\nu}(x)+\psi^{\mu,\nu}(y)\le\vert x-y\vert^2$)
stands for the \textit{optimal couple} in the Kantorovich duality formulation of the Wasserstein distance between $\mu$ and $\nu$.


\subsection{Penalized barycenters of a random measure}
Throughout the paper, we use  bold symbols $\bnu, \bX, \bfun, \ldots$ to denote random objects. 
A probability measure $\bnu$ in $\PP_2(\Omega)$ is said to be random if it is sampled from a distribution $\P$ on $(\PP_2(\Omega), \BB \left( \PP_2(\Omega)\right)$, where $\BB \left( \PP_2(\Omega) \right)$ is the  Borel $\sigma$-algebra  generated by the topology induced by the distance $W_{2}$. Then, we introduce a Wasserstein distance between distributions of random measures (see \cite{gouic2015existence} and \cite{alvarez2015wide} for similar concepts), and the notion of Wasserstein barycenter of a random probability measure $\bnu$. 
\begin{defi}
Let $W_2(\PP_2(\Omega))$ be the space of distributions $\P$ on $\PP_2(\Omega)$ (endowed with the Wasserstein distance $W_{2}$) such that for some (thus for every) $\mu\in\PP_2(\Omega)$
\[\mathcal{W}_2^2(\delta_{\mu},\P):=\E_{\P}(W_2^2(\mu,\bnu))=\int_{\PP_2(\Omega)}W_2^2(\mu,\nu)d\P(\nu)< + \infty,\]
where $\bnu\in\PP_2(\Omega)$ is a random measure with distribution $\P$ and $\delta_{\mu}$ denotes the Dirac measure at point $\mu$. The Wasserstein barycenter of a random probability measure with distribution  $\P\in W_2(\PP_2(\Omega))$ is defined as a minimizer of
\begin{equation}
\mu\in\PP_2(\Omega)\mapsto\mathcal{W}_2^2(\delta_{\mu},\P)=\int_{\PP_2(\Omega)}W_2^2(\mu,\nu)d\P(\nu)  \mbox{ over } \mu \in  \PP_2(\Omega). \label{eq:defbarP} 
\end{equation}
\end{defi}
Thanks to the results in \cite{gouic2015existence}, there exists a minimizer of \eqref{eq:defbarP}, and thus the notion of Wasserstein barycenter of a random probability measure is well defined. 
As explained before, our goal is to study a penalized version of this barycenter. Hence, 
throughout the paper the following assumptions are made on the penalizing function $E$.
\begin{hyp} \label{hyp_E}
A penalizing function  $E: \PP_2(\Omega) \to \R_{+}$ is a proper and lower semicontinuous function (for the Wasserstein distance $W_{2}$)  that is strictly convex on its domain
\begin{equation}
\DD(E) = \left\{ \mu \in \PP_2(\Omega) \mbox{ such that } E(\mu) < +\infty  \right\}. \label{eq:domE}
\end{equation}
\end{hyp}
In this paper, we will often rely on the class of relative $G$-functionals (see Chapter 9, Section 9.4 of \cite{ambrosio2008gradient}) defined below.
\begin{defi}\label{defini_Gfun}
The relative $G$-functional with respect to (w.r.t) a given positive measure $\lambda \in \MM(\Omega)$ is the function $E: \PP_2(\Omega) \to \R_{+}$ defined by
\begin{equation} \label{eq:Gfun}
E(\mu) = \left\{\begin{array}{ll}
\displaystyle\int_{\Omega} G\left(\frac{d\mu}{d\lambda}(x)\right) d\lambda(x), & \mbox{if}\  \mu\ll \lambda \\
+\infty & \mbox{otherwise,}
\end{array}\right.
\end{equation}
where $G:[0,+\infty)\to [0,+\infty]$ is a proper, lower semicontinuous and strictly convex function with superlinear growth.

Thanks to Lemma 9.4.3 in \cite{ambrosio2008gradient},  a relative $G$-functional is a lower semicontinuous function for the Wasserstein distance $W_{2}$, so that it satisfies Assumption \ref{hyp_E}.  
\end{defi}

When $\lambda$ is the Lebesgue measure on $\Omega\in\R^d$, choosing such a penalizing function  enforces the Wasserstein barycenter to be a.c.  Hence, 
a typical example of  penalizing function satisfying Assumption \ref{hyp_E} is the negative entropy \cite{burger2012regularized} (see e.g.\ Lemma 1.4.3 in \cite{MR1431744}) defined as (assuming e.g. that $\Omega$ is compact)
\begin{equation}\label{def:negative_entropy}
E_e(\mu) = \left\{\begin{array}{ll}
\displaystyle\int_{\Omega} (f(x) (\log(f(x))-1)+1) dx, & \mbox{if $\mu$ admits a density $f$ with respect}\vspace{-0.2cm}\\
& \mbox{to the Lebesgue measure $dx$ on $\Omega$,}\vspace{0.2cm} \\
+\infty & \mbox{otherwise.}
\end{array}\right.
\end{equation}
It is of interest to use the negative entropy as a penalizing function when one has only  access to discrete observations, that is in the setting where each $\bnu_i$ is a discrete measure of the form \eqref{eq:discrete}. Indeed in this case, the resulting Wasserstein barycenter minimizing \eqref{eq:pbempBar} will not necessary be a.c.\ (unless it it penalized) whereas we are interested in recovering a density from discrete measures. \\ 

Penalized Wasserstein barycenters of a random measure $\bnu \in \PP_2(\Omega)$ are then defined as follows.
\begin{defi} \label{defini_J}
Let $E$ be a penalizing function satisfying Assumption  \ref{hyp_E}. For a distribution  $\P\in W_2(\PP_2(\Omega))$ and a penalization parameter $\gamma \geq 0$,  the functional $J_{\P}^{\gamma} : \PP_2(\Omega) \to \R_{+}$ is defined as
\begin{equation}
\label{def_J}
J_{\P}^{\gamma}(\mu) =  \int_{\PP_2(\Omega)} W_2^2(\mu,\nu)d\P(\nu)+\gamma E(\mu), \; \mu \in \PP_{2}(\Omega).
\end{equation}
If it exists, a minimizer $\mu_{\P}^{\gamma}$ of $J_{\P}^{\gamma}$ is called a penalized Wasserstein barycenter of the random measure $\bnu$ with distribution  $\P$. 

In particular, if $\P$ is the discrete (resp. empirical) measure defined by
$\P=\P_n=\frac{1}{n}\sum_{i=1}^n\delta_{\nu_i} $ (resp. $\P_n=\frac{1}{n}\sum_{i=1}^n\delta_{\bnu_i}$) where each $\nu_i \in \mathcal{P}_2(\Omega)$ (resp. $\bnu_i \in \mathcal{P}_2(\Omega)$ random), then $J_{\P}^{\gamma}$ becomes
\begin{equation}
\label{def_Jn}
J_{\P_n}^{\gamma}(\mu) =  \frac{1}{n}\sum_{i=1}^n W_2^2(\mu,\nu_i)+\gamma E(\mu).
\end{equation}
Note that $J_{\P}^{\gamma}$ is strictly convex on $\DD(E)$ by Assumption \ref{hyp_E}.
\end{defi} 

\subsection{Subgradient's inequality}
In order to analyze the stability of the minimizers of $J_{\P}^{\gamma}$ with respect to the distribution $\P$, the notion of Bregman divergence  related to a  sufficiently smooth penalizing function $E$  will be needed. To simplify the presentation, we shall now restrict our analysis to relative $G$-functionals \eqref{eq:Gfun}.

\begin{defi}[Subgradient]
Let  $J: \PP_2(\Omega)  \to \R$ be a convex, proper and  lower semicontinuous function. 
Any subgradient $\xi\in \partial J(\mu)$ of $J$ at $\mu \in \DD(J)$  satisfies the inequality
\begin{equation}
\label{def_subgradient}
J(\nu) \ge J(\mu)+ \langle  \xi , \nu-\mu \rangle \ \text{for every} \ \nu \in \PP_2(\Omega),
\end{equation}
and the linear form in the right-hand side of \eqref{def_subgradient}  is understood as
$$
 \langle \xi, \nu-\mu \rangle = \int_{\Omega}  \xi(x)(d\nu(x) - d\mu(x)).
$$
If the function is differentiable, then the subdifferential $\partial J(\mu)$ is a singleton, and thus we have $\partial J(\mu)=\{\nabla J(\mu)\}$, the gradient of $J$ at point $\mu$.
\end{defi}

In what follows, we will consider subgradients for two different purposes: (i) to define a Bregman divergence with respect to $E$ and (ii) to obtain the main result of this section that involves subgradient of the Wasserstein distance.

\begin{defi}
A penalizing function $E$ is said to be a smooth relative $G$-func\- tional if the function $G$ is differentiable on $[0,+\infty)$. 
\end{defi}
From Definition \ref{defini_Gfun}, we directly have the following proposition.
\begin{proposition}
Let  $E$ be a smooth relative $G$-functional. 
We denote by $\nabla E(\mu)$ the subgradient of $E$ at $\mu \in \DD(E)$ taken as
$$
\nabla E(\mu)(x) = \nabla G\left(\frac{d\mu}{d\lambda}(x)\right), \; x \in \Omega.
$$
\end{proposition}

\begin{defi}[Bregman divergence]
Let  $E$ be a smooth relative $G$-functional. For $\mu,\nu\in \DD(E) \subset \PP_2(\Omega)$ the (symmetric) Bregman divergence related to $E$ is defined by
\begin{equation}
d_E(\mu,\nu)=\langle \nabla E(\mu) - \nabla E(\nu), \mu-\nu \rangle. \label{eq:dE}
\end{equation}
\end{defi}

\begin{rmq}
More generally, the Bregman divergence between $\mu$ and $\nu$ related to a convex functional $J: \PP_2(\Omega)  \to \R$ is defined for two particular subgradients $\xi\in\partial J(\mu)$ and $\kappa\in\partial J(\nu)$ by
$$d_J^{\xi,\kappa}(\mu,\nu)=\langle \xi-\kappa,\mu-\nu\rangle.$$
\end{rmq}

To illustrate the above definitions, let us assume that $\lambda$ is the Lebesgue measure $dx$, and consider two a.c.\  measures $\mu = \mu_{f}$ and $\nu = \nu_{g}$ with density $f$ and $g$. An example of a smooth relative $G$-functional is the case where $G(u) = u^2 / 2$ for which $E(\mu_{f}) = \frac{1}{2} \| f \|^2_{\mathbb{L}^2(\Omega)}= \frac{1}{2} \int_{\Omega} |f(x)|^2 dx $,
$$
\nabla E(\mu_{f})(x) =  f(x), \; \nabla E(\nu_{g})(x) = g(x)  \quad \mbox{ and } \quad  d_E(\mu_{f},\nu_{g})=  \int_{\Omega} (f(x) - g(x))^2 dx.
$$

\begin{rmq} \label{rmq:entropy}
It should be noted that the case where $E$ is the negative entropy $E_e$ defined in \eqref{def:negative_entropy} is critical. Indeed, the negative entropy is obviously a relative $G$-functional with $G(u) = u (\log(u)-1) +1$ and $\lambda = dx$. However, as  this function is not differentiable at $u = 0$, it does not lead to  a smooth relative $G$-functional. To use such a penalizing function, it is necessary to restrict the analysis of penalized Wasserstein barycenters to the set of a.c.\ measures in $\PP_2(\Omega)$ with densities that are uniformly bounded from below by a positive constant  $\alpha$ on the set $\Omega$. In this setting, we have that
$$
\nabla E_e(\mu_{f})=\log(f(x)) \quad \mbox{and} \quad \nabla E_e(\nu_{g})=\log(g(x)), \; x \in \Omega,
$$
and the  Bregman divergence is the symmetrized Kullback-Leibler divergence
$$
d_{E_e}(\mu_{f},\nu_{g}) = \int_{\Omega}  (f(x) - g(x)) \log\left(\frac{f(x)}{g(x)}\right) dx,
$$
where $f(x) \geq \alpha$ and $g(x) \geq \alpha$ for all $x \in \Omega$. \\
\end{rmq}
Then, a key result to study the stability of penalized Wasserstein barycenters with respect to the distribution $\P$ is stated below. It involves a subgradient $\phi$  of the Wasserstein distance. As detailed in the proof given in the Appendix \ref{App_subgrad}, this subgradient   corresponds to the Kantorovich potential introduced in Theorem \ref{Kantorovich}.
\begin{thm}[Subgradient's inequality]
\label{thm_subgradient}
Let  $E$ be a smooth relative $G$-func\-tional and thus satisfying Assumption  \ref{hyp_E}. Let $\nu$ be a probability measure in $\PP_2(\Omega)$, and define the functional  
\[J: \mu\in \PP_{2}(\Omega)\mapsto W_2^2(\mu,\nu)+\gamma E(\mu) \]
where $\gamma \geq 0$. If $\mu\in \PP_{2}(\Omega)$ minimizes $J$, then there exists a subgradient $\phi^{\mu,\nu}\in\mathbb{L}_1(\mu)$ of  $W_2^2(\cdot,\nu)$ at $\mu$ and  a potential $\psi\in\mathbb{L}_1(\nu)$ verifying
$
\phi^{\mu,\nu}(x)+\psi(y) \le \vert x-y \vert ^2
$
for all $x,y$ in $\Omega$ such that $(\phi^{\mu,\nu},\psi)$ is an optimal couple of the Kantorovich's dual problem associated to $\mu,\nu$ (Theorem \ref{Kantorovich}). Moreover, for all $\eta\in \PP_{2}(\Omega)$,
\begin{equation}
\label{ineg_subgradient}
 \gamma \ \langle\nabla E(\mu),\mu-\eta\rangle\le - \int \phi^{\mu,\nu} d(\mu-\eta).
\end{equation}
\end{thm}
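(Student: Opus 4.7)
The proof strategy combines Kantorovich's duality with a first-order variational argument along convex combinations $\mu_t=(1-t)\mu+t\eta$ for $\eta\in\PP_2(\Omega)$.

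I would first record that any optimal, pointwise-admissible Kantorovich pair $(\phi^{\mu,\nu},\psi^{\mu,\nu})$ for $(\mu,\nu)$ in \eqref{W_2_kanto}---which one can always arrange by replacing the potentials with their double $c$-transforms, so that $\phi^{\mu,\nu}(x)+\psi^{\mu,\nu}(y)\le|x-y|^2$ holds pointwise on $\Omega\times\Omega$---is a subgradient of $W_2^2(\cdot,\nu)$ at $\mu$. Since such a pair remains admissible for any second argument, Kantorovich duality gives, for every $\eta\in\PP_2(\Omega)$,
\[W_2^2(\eta,\nu)\ge \int \phi^{\mu,\nu}d\eta+\int\psi^{\mu,\nu}d\nu = W_2^2(\mu,\nu)+\int \phi^{\mu,\nu}d(\eta-\mu),\]
which is the defining subgradient inequality.

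Next I would exploit the minimality of $\mu$ for $J$ through the perturbation $\mu_t$. The inequality $[J(\mu_t)-J(\mu)]/t\ge 0$ splits as
\[\frac{W_2^2(\mu_t,\nu)-W_2^2(\mu,\nu)}{t}+\gamma\,\frac{E(\mu_t)-E(\mu)}{t}\ge 0.\]
To control the Wasserstein part, I would pick an optimal pointwise-admissible pair $(\phi^{\mu_t,\nu},\psi^{\mu_t,\nu})$ for $(\mu_t,\nu)$; admissibility for $(\mu,\nu)$ yields $W_2^2(\mu,\nu)\ge \int\phi^{\mu_t,\nu}d\mu+\int\psi^{\mu_t,\nu}d\nu$, and subtracting from the identity $W_2^2(\mu_t,\nu)=\int\phi^{\mu_t,\nu}d\mu_t+\int\psi^{\mu_t,\nu}d\nu$ produces the upper bound
\[\frac{W_2^2(\mu_t,\nu)-W_2^2(\mu,\nu)}{t}\le \int\phi^{\mu_t,\nu}d(\eta-\mu).\]
For the entropy part, the assumed differentiability of $G$ together with convexity of $E$ guarantees $[E(\mu_t)-E(\mu)]/t\to\langle\nabla E(\mu),\eta-\mu\rangle$ as $t\to 0^+$.

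The main obstacle is passing to the limit $t\to 0^+$ in $\int\phi^{\mu_t,\nu}d(\eta-\mu)$. I would invoke a stability/compactness result for Kantorovich potentials: since $W_2(\mu_t,\mu)\to 0$, after normalizing the $c$-concave potentials $\phi^{\mu_t,\nu}$ to share a common additive calibration, their quadratic-Lipschitz control (inherited from $c$-concavity) combined with Arzel\`a--Ascoli on compact subsets of $\Omega$ yields a subsequential limit $\phi^{\mu,\nu}$ that is optimal for $(\mu,\nu)$, with convergence in $\mathbb{L}_1(\mu)$ and $\mathbb{L}_1(\eta)$ (the uniform integrability following from the finite second-moment assumption). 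Letting $t\to 0^+$ then delivers
\[0\le \int\phi^{\mu,\nu}d(\eta-\mu)+\gamma\,\langle\nabla E(\mu),\eta-\mu\rangle,\]
which is exactly \eqref{ineg_subgradient} after rearrangement; the limit pair $(\phi^{\mu,\nu},\psi^{\mu,\nu})$ is the optimal couple featured in the statement.
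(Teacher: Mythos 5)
Your strategy (perturbation along the segment $\mu_t=(1-t)\mu+t\eta$, an upper bound on the Wasserstein difference quotient via an optimal potential at $\mu_t$, and a compactness/stability argument for these potentials as $t\to 0^+$) is genuinely different from the paper's, which never differentiates along segments: the paper proves an abstract optimality criterion (Lemma \ref{lemma_1}: existence of one subgradient of $J$ with nonnegative pairing against \emph{all} admissible directions, obtained via directional derivatives and a Ky Fan minimax argument), identifies $\partial_1 W_2^2(\cdot,\nu)$ with optimal Kantorovich couples through Legendre--Fenchel biconjugation (Lemma \ref{lemma_2}), and then applies the sum rule for subdifferentials. Your computation of the Wasserstein difference quotient and the easy ``admissible optimal pair $\Rightarrow$ subgradient'' direction are correct. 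However, as a proof of the theorem as stated, there are genuine gaps.

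First, your argument only reaches $\eta\in\DD(E)$: if $\eta$ is outside the domain of $E$ (e.g.\ $\eta\not\ll\lambda$), then $E(\mu_t)=+\infty$ for every $t>0$, the inequality $[J(\mu_t)-J(\mu)]/t\ge 0$ is vacuous, and the claimed limit $[E(\mu_t)-E(\mu)]/t\to\langle\nabla E(\mu),\eta-\mu\rangle$ fails; the theorem asserts \eqref{ineg_subgradient} for every $\eta\in\PP_2(\Omega)$. Second, the quantifiers come out in the wrong order: your $\phi^{\mu,\nu}$ is a subsequential limit of $\phi^{\mu_t,\nu}$ with $\mu_t$ depending on $\eta$, hence it may depend on $\eta$, whereas the statement provides a single optimal couple $(\phi^{\mu,\nu},\psi)$ for which \eqref{ineg_subgradient} holds simultaneously for all $\eta$; optimal potentials need not be unique, and not every optimal potential is a valid choice, so this uniformity is a real issue --- it is exactly what the minimax step in the proof of Lemma \ref{lemma_1} delivers. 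Third, the key limiting step is only asserted: the theorem is stated for a general convex $\Omega\subset\R^d$, not a compact one, so the equi-Lipschitz/Arzel\`a--Ascoli and uniform-integrability claims for the calibrated family $(\phi^{\mu_t,\nu})_t$ require a genuine argument (controlling where the infima defining the $c$-transforms are nearly attained, uniformly in $t$); the paper's route via Lemma \ref{lemma_2} bypasses this stability question altogether. For the downstream uses in the paper, where the inequality is invoked at a fixed $\eta\in\DD(E)$, your weaker, $\eta$-dependent conclusion would essentially suffice, but it does not prove the theorem as stated without these repairs.
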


\section{Existence, uniqueness and stability of penalized barycenters} \label{sec:exist}

In this section, we present some properties of the minimizers of the functional $J_{\P}^{\gamma}$ defined in \eqref{defini_J} in terms of existence, uniqueness and stability.

\subsection{Existence and uniqueness}
We first sconsider  the minimization problem \eqref{def_J}  in the particular setting where $\P$ is a discrete distribution on $\PP_2(\Omega)$. That is, we study the problem
\begin{equation}
\label{J_discret}
\underset{\mu\in\PP_2(\Omega)}{\min}\ J_{\P_n}^{\gamma}(\mu)=\int W_2^2(\mu,\nu)d\P_n(\nu)+\gamma E(\mu)=\frac{1}{n}\sum_{i=1}^nW_2^2(\mu,\nu_i)+\gamma E(\mu)
\end{equation}
where $\P_n=\frac{1}{n}\sum_{i=1}^n\delta_{\nu_i}\in W_2(\PP_2(\Omega))$ with $\nu_1,\ldots,\nu_n$  measures in $\PP_2(\Omega)$.
\begin{thm}
\label{exist_discrete}
Suppose that Assumption \ref{hyp_E} holds and that $\gamma > 0$. Then, the functional $J_{\P_n}^{\gamma}$ defined by \eqref{J_discret} admits a unique minimizer on $\PP_2(\Omega)$ which belongs to the domain $\DD(E)$ of the penalizing function $E$, as defined in \eqref{eq:domE}.
\end{thm}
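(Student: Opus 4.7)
The plan is to apply the direct method of the calculus of variations: I would show that $J_{\P_n}^{\gamma}$ is proper, bounded below, narrowly sequentially lower semicontinuous on $\PP_2(\Omega)$, and that every minimizing sequence is narrowly relatively compact. Strict convexity of the penalty term then delivers uniqueness.

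First, to see that $J_{\P_n}^{\gamma}$ is proper I would pick any $\mu_0\in\DD(E)$ (such a $\mu_0$ exists because $E$ is proper by Assumption \ref{hyp_E}). Since $\mu_0$ and the $\nu_i$ all belong to $\PP_2(\Omega)$, every $W_2^2(\mu_0,\nu_i)$ is finite, so $J_{\P_n}^{\gamma}(\mu_0)<+\infty$. Together with $E\ge 0$ and $W_2^2(\cdot,\nu_i)\ge 0$, this gives a finite infimum. Let $(\mu_k)_k\subset \PP_2(\Omega)$ be a minimizing sequence. Using the elementary lower bound
\[
W_2^2(\mu_k,\nu_i)\ \ge\ \Big(M_2(\mu_k)^{1/2}-M_2(\nu_i)^{1/2}\Big)^2,
\]
where $M_2(\mu):=\int|x|^2\,d\mu(x)$, together with $\gamma E(\mu_k)\ge 0$, the bound on $J_{\P_n}^{\gamma}(\mu_k)$ forces $\sup_k M_2(\mu_k)<+\infty$. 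Markov's inequality then yields tightness of $(\mu_k)_k$, so Prokhorov's theorem supplies a subsequence (not relabeled) converging narrowly to some $\mu^{\star}\in\PP_2(\Omega)$, and uniform second-moment control further guarantees $\mu^{\star}$ has finite second moment.

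For existence I would then invoke narrow lower semicontinuity of each summand. The map $\mu\mapsto W_2^2(\mu,\nu_i)$ is narrowly lower semicontinuous on $\PP_2(\Omega)$ (a standard property of $W_2$, following for instance from Kantorovich duality \eqref{W_2_kanto} viewed as a supremum of narrowly continuous affine functionals of $\mu$, or directly from \cite{villani2003topics}), and relative $G$-functionals are narrowly lower semicontinuous by Lemma 9.4.3 of \cite{ambrosio2008gradient} invoked in Definition \ref{defini_Gfun}. Summing, $J_{\P_n}^{\gamma}(\mu^{\star})\le \liminf_k J_{\P_n}^{\gamma}(\mu_k)=\inf J_{\P_n}^{\gamma}$, so $\mu^{\star}$ is a minimizer; moreover $E(\mu^{\star})<+\infty$ forces $\mu^{\star}\in\DD(E)$.

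For uniqueness I would use strict convexity along linear interpolations of measures. Kantorovich duality expresses $\mu\mapsto W_2^2(\mu,\nu_i)$ as a supremum of affine functionals of $\mu$, hence it is convex on $\PP_2(\Omega)$. Since $\gamma>0$ and $E$ is strictly convex on $\DD(E)$ by Assumption \ref{hyp_E}, $J_{\P_n}^{\gamma}$ is strictly convex on the convex set $\DD(E)$, and any minimizer must lie in $\DD(E)$ (else the value would be $+\infty$); two distinct minimizers cannot therefore coexist. The main technical point, and the place I would be most careful, is the narrow lower semicontinuity of $W_2^2(\cdot,\nu_i)$ when $\Omega$ is unbounded, since Kantorovich potentials need not be bounded; this is handled by restricting attention to sets of uniformly bounded second moment provided by the coercivity step above, which is exactly the regime that matters for the minimizing sequence.
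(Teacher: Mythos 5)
Your proposal is correct and follows the same direct-method skeleton as the paper (minimizing sequence, tightness, Prokhorov, lower semicontinuity of each term, strict convexity for uniqueness), but the two technical steps are justified by different ingredients. For tightness, the paper bounds $\int|x|^{\alpha}d\mu^k$ for $1<\alpha<2$ via the dual formulation of Lemma~2.1 in \cite{agueh2011barycenters} (using that $|x|^{\alpha}$ lies in the space $Z$), whereas you bound the second moments directly from the triangle inequality $W_2(\mu_k,\nu_i)\ge |M_2(\mu_k)^{1/2}-M_2(\nu_i)^{1/2}|$; your route is more elementary and gives a slightly stronger moment bound, which also makes the finiteness of $M_2(\mu^{\star})$ immediate. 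For lower semicontinuity of $\mu\mapsto W_2^2(\mu,\nu_i)$ along the narrowly convergent subsequence, the paper extracts optimal transport plans and invokes Proposition~7.1.3 of \cite{ambrosio2008gradient}, while you use Kantorovich duality restricted to bounded continuous pairs, i.e. a supremum of narrowly continuous affine functionals; both are standard and valid, though note that your closing remark about "restricting to sets of uniformly bounded second moment" is not what makes this step work --- the $\mathcal{C}_b$ form of the duality already yields narrow lower semicontinuity without it. Two minor points of care: your lower-semicontinuity argument for $E$ appeals to Lemma~9.4.3 of \cite{ambrosio2008gradient}, hence covers relative $G$-functionals, whereas the theorem is stated for any $E$ satisfying Assumption~\ref{hyp_E} (lower semicontinuity for $W_2$ only); since the subsequence converges narrowly but not necessarily in $W_2$, narrow lower semicontinuity is what is genuinely needed there --- the paper's own proof glides over the same point, so this is a shared subtlety rather than a gap in your argument. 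The uniqueness step is identical in both proofs: convexity of each $W_2^2(\cdot,\nu_i)$ plus strict convexity of $E$ on $\DD(E)$ and $\gamma>0$, with any minimizer forced into $\DD(E)$ by finiteness of the optimal value.
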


The proof of  Theorem \ref{exist_discrete} is given in the Appendix \ref{App_exist}. Thanks to this result, one may  impose the penalized Wasserstein barycenter $\mu_{\P_{n}}^{\gamma}$  to be a.c.\ on $\Omega$ by choosing a penalization function $E$ with value $+\infty$ outside of the space of a.c. distributions. 
For this choice,  \eqref{J_discret} becomes a problem of minimization over a set of pdf. 

The existence and uniqueness of \eqref{def_J} can now be shown in a general case. Since any probability measure in $\P \in W_2(\PP_2(\Omega))$ can be approximated by a sequence of finitely supported measures $\P_n$  (see Theorem \ref{thm_ex} in Appendix \ref{App_exist}), we can lean on Theorem \ref{exist_discrete} for the  proof of the following result, which is also detailed in the Appendix \ref{App_exist}.
\begin{thm}
\label{exist_continuous}
Let $\P \in W_2(\PP_2(\Omega))$. Suppose that Assumption \ref{hyp_E} holds and that $\gamma > 0$. Then, the functional $J_{\P}^{\gamma}$ defined by \eqref{def_J} admits a unique minimizer.
\end{thm}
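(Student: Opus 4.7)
The statement breaks into uniqueness and existence. Uniqueness is immediate: $\mu\mapsto W_2^2(\mu,\nu)$ is convex for each fixed $\nu$, so its average against $\P$ is convex, and Assumption \ref{hyp_E} makes $\gamma E$ strictly convex on $\DD(E)$; hence $J_{\P}^{\gamma}$ is strictly convex on $\DD(E)$. Any minimizer must lie in $\DD(E)$, since otherwise $J_{\P}^{\gamma}=+\infty$ while $J_{\P}^{\gamma}$ is finite at any $\mu_0\in\DD(E)$ (recall $\P\in W_2(\PP_2(\Omega))$ means the first term is finite for some, thus every, $\mu_0\in\PP_2(\Omega)$). Strict convexity on $\DD(E)$ then gives at most one minimizer.

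For existence I would follow the approximation route suggested in the excerpt. Using Theorem \ref{thm_ex} (appealed to in the paragraph preceding the statement), pick a sequence of finitely supported probability measures $\P_n\in W_2(\PP_2(\Omega))$ with $\mathcal{W}_2(\P_n,\P)\to 0$. Theorem \ref{exist_discrete} yields, for each $n$, a unique minimizer $\mu_n:=\mu_{\P_n}^{\gamma}\in\DD(E)$. The first main step is to derive a priori estimates on $(\mu_n)$: fixing any $\mu_0\in\DD(E)$ and using the minimizing property,
\[
\int W_2^2(\mu_n,\nu)\,d\P_n(\nu)+\gamma E(\mu_n)\;\le\;\int W_2^2(\mu_0,\nu)\,d\P_n(\nu)+\gamma E(\mu_0).
\]
Since $\mathcal{W}_2(\P_n,\P)\to 0$ the right-hand side is bounded uniformly in $n$, which yields (i) a uniform bound on $E(\mu_n)$ and (ii) a uniform bound on $\int W_2^2(\mu_n,\nu)\,d\P_n(\nu)$. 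Combining (ii) with the triangle inequality for $W_2$ (applied with some fixed reference measure like $\delta_0$) gives a uniform bound on the second moments of $\mu_n$, hence tightness of $(\mu_n)$.

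The second step is compactness and passage to the limit. By tightness, extract a subsequence $\mu_{n_k}\rightharpoonup \mu^\ast$ narrowly; the uniform second moment bound upgrades this to $W_2$-convergence. Lower semicontinuity of $E$ for $W_2$ (Assumption \ref{hyp_E}, or Lemma 9.4.3 of \cite{ambrosio2008gradient} for relative $G$-functionals) yields $E(\mu^\ast)\le\liminf_k E(\mu_{n_k})$. For the transport term, I would use that $\nu\mapsto W_2(\mu,\nu)$ is $1$-Lipschitz in $W_2$ jointly with $W_2(\mu_{n_k},\mu^\ast)\to 0$ to show
\[
\int W_2^2(\mu^\ast,\nu)\,d\P(\nu)\;\le\;\liminf_{k\to\infty}\int W_2^2(\mu_{n_k},\nu)\,d\P_{n_k}(\nu),
\]
and, for any fixed comparison $\eta\in\DD(E)$, the analogous continuity giving
\[
\int W_2^2(\mu_{n_k},\eta)\,d\P_{n_k}(\nu)\;\longrightarrow\;\int W_2^2(\mu^\ast,\eta)\,d\P(\nu),
\]
where the inequalities combine into $J_{\P}^{\gamma}(\mu^\ast)\le\liminf_k J_{\P_{n_k}}^{\gamma}(\mu_{n_k})\le\liminf_k J_{\P_{n_k}}^{\gamma}(\eta)=J_{\P}^{\gamma}(\eta)$. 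Taking the infimum over $\eta\in\DD(E)$ shows $\mu^\ast$ minimizes $J_{\P}^{\gamma}$.

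\textbf{Main obstacle.} The delicate point is the joint limit in the Wasserstein term, where both the inner measure $\mu_{n_k}$ and the outer distribution $\P_{n_k}$ move simultaneously. Controlling it cleanly requires a Minkowski/Kantorovich-type inequality relating $\int W_2^2(\mu,\nu)\,d\P(\nu)$ and $\int W_2^2(\mu,\nu)\,d\P_n(\nu)$ via $\mathcal{W}_2(\P_n,\P)$, together with the uniform second moment bound on $\mu_n$ so that the integrands are uniformly integrable. Once this estimate is set up, both the lower semicontinuity half (applied at $\mu^\ast$) and the continuity half (applied at a fixed $\eta$) are routine, and uniqueness closes the argument.
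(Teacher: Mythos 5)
Your overall architecture coincides with the paper's: approximate $\P$ by finitely supported $\P_n$ with $\mathcal{W}_2(\P_n,\P)\to 0$ (Theorem \ref{thm_ex}), take the minimizers $\mu_n$ given by Theorem \ref{exist_discrete}, obtain tightness from moment bounds, extract a narrow limit $\mu^\ast$ by Prokhorov, pass to the limit against an arbitrary competitor $\eta$, and get uniqueness from strict convexity on $\DD(E)$ (that part is correct). The genuine gap is in the limit passage: you claim that narrow convergence of $\mu_{n_k}$ together with a uniform bound on second moments ``upgrades'' to $W_2$-convergence, and your treatment of the transport term relies on $W_2(\mu_{n_k},\mu^\ast)\to 0$ through the $1$-Lipschitz property of $\nu\mapsto W_2(\mu,\nu)$. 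This upgrade is false in general: nothing in this theorem makes $\Omega$ compact, and for $\mu_k=(1-k^{-1})\delta_0+k^{-1}\delta_{x_k}$ with $\vert x_k\vert^2=k$ the second moments are bounded by $1$ and $\mu_k\rightharpoonup\delta_0$ narrowly, yet $W_2^2(\mu_k,\delta_0)=1$ for all $k$. Convergence in $W_2$ would require uniform integrability of the second moments, which your a priori estimate does not give. (A smaller slip: your ``continuity'' display has integrand $W_2^2(\mu_{n_k},\eta)$, which does not depend on $\nu$; what you need, and what is true, is $\int W_2^2(\eta,\nu)\,d\P_{n_k}(\nu)\to\int W_2^2(\eta,\nu)\,d\P(\nu)$, a direct consequence of $\mathcal{W}_2(\P_{n_k},\P)\to 0$.)

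The step can be repaired without any $W_2$-convergence of $\mu_{n_k}$, and this is exactly how the paper argues: by the reverse triangle inequality, $\mathcal{W}_2(\delta_{\mu_{n_k}},\P_{n_k})\ \ge\ \mathcal{W}_2(\delta_{\mu_{n_k}},\P)-\mathcal{W}_2(\P,\P_{n_k})$, so the moving outer distribution is absorbed into the vanishing term $\mathcal{W}_2(\P,\P_{n_k})$; then $\liminf_k\mathcal{W}_2^2(\delta_{\mu_{n_k}},\P)\ \ge\ \int W_2^2(\mu^\ast,\nu)\,d\P(\nu)$ follows from Fatou's lemma combined with the lower semicontinuity of $\mu\mapsto W_2^2(\mu,\nu)$ along narrowly convergent sequences, which comes from the Kantorovich dual representation with bounded continuous potentials (the paper's \eqref{eq:FenchelLegendre}), not from Lipschitz continuity in $W_2$. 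You should also invoke lower semicontinuity of $E$ along the narrowly convergent sequence $\mu_{n_k}$ (as the paper does, and as holds for the relative $G$-functionals via Lemma 9.4.3 of \cite{ambrosio2008gradient}), since your statement of lsc ``for $W_2$'' was only being applied under your unjustified $W_2$-convergence claim. With these substitutions your plan closes, but as written the key inequality rests on a false compactness claim.
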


\subsection{Stability}


When $\gamma > 0$, we now study the stability of the minimizer of $J_{\P}^{\gamma}$ with respect to discrete distributions $\P$ and the symmetric Bregman divergence $d_{E}$ \eqref{eq:dE} associated to a smooth relative $G$-functional $E$. Set $\nu_1,\ldots,\nu_n\in\PP_2(\Omega)$ and $\eta_1,\ldots, \eta_n \in\PP_2(\Omega)$. We denote by $\P_n^{\nu}$ (resp.\ $\P_n^{\eta}$) the discrete measure $\frac{1}{n}\sum_{i=1}^n\delta_{\nu_i}$ (resp.\ $\frac{1}{n}\sum_{i=1}^n\delta_{\eta_i}$)  in $W_2(\PP_2(\Omega))$. 
\begin{thm}
\label{stability}
Let  $E$ be a smooth relative $G$-functional thus satisfying Assumption  \ref{hyp_E} and $\Omega$ a compact subset of $\R^d$. Let $\mu_{\nu},\mu_{\eta}\in\PP_{2}(\Omega)$ with $\mu_{\nu}$ minimizing $J_{\P_n^{\nu}}^{\gamma}$ and $\mu_{\eta}$ minimizing $J_{\P_n^{\eta}}^{\gamma}$ defined by \eqref{J_discret}. Then, the symmetric Bregman divergence associated to $E$ can be upper bounded as follows
\begin{equation}
d_E(\mu_{\nu},\mu_{\eta}) \le \frac{4\diam(\Omega)}{\gamma n}\ \underset{\sigma\in\mathcal{S}_n}{\inf}\sum_{i=1}^nW_2(\nu_i,\eta_{\sigma(i)}), \label{eq:boundperm}
\end{equation}
where $\mathcal{S}_n$ is the permutation group of the set $\{1,\ldots,n\}$, and $\diam(\Omega)$ stands for the diameter of $\Omega$.
\end{thm}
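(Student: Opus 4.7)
The plan is to apply the subgradient inequality of Theorem \ref{thm_subgradient} at both $\mu_\nu$ and $\mu_\eta$, sum the resulting inequalities to isolate the Bregman divergence on the left, and then convert the Kantorovich potentials that appear on the right into $W_2$ distances between the data via a four-point duality argument.

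At the minimizer $\mu_\nu$ of $J^\gamma_{\P_n^\nu}(\mu)=\frac{1}{n}\sum_i W_2^2(\mu,\nu_i)+\gamma E(\mu)$, the subgradient of the data-fidelity term is the average of $n$ Kantorovich potentials, and Theorem \ref{thm_subgradient} therefore provides potentials $\phi^{\mu_\nu,\nu_i}$ (with dual partners $\psi^{\mu_\nu,\nu_i}$) such that, for every $\eta\in\PP_2(\Omega)$,
\[
\gamma\,\langle\nabla E(\mu_\nu),\mu_\nu-\eta\rangle\;\le\;-\frac{1}{n}\sum_{i=1}^n \int \phi^{\mu_\nu,\nu_i}\,d(\mu_\nu-\eta),
\]
and the symmetric inequality holds at $\mu_\eta$ with potentials $\phi^{\mu_\eta,\eta_i}$. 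Choosing $\eta=\mu_\eta$ in the first and $\eta=\mu_\nu$ in the second, then adding them, yields
\[
\gamma\, d_E(\mu_\nu,\mu_\eta) \;\le\; \frac{1}{n}\sum_{i=1}^n \int\bigl(\phi^{\mu_\eta,\eta_i}-\phi^{\mu_\nu,\nu_i}\bigr)\,d(\mu_\nu-\mu_\eta).
\]
Reindexing the second sum by an arbitrary permutation $\sigma\in\mathcal S_n$ leaves the right-hand side unchanged, so $\eta_i$ may be replaced by $\eta_{\sigma(i)}$ and the infimum taken over $\sigma$ at the very end.

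Next, for fixed $i$, I will bound $\int\bigl(\phi^{\mu_\eta,\eta_{\sigma(i)}}-\phi^{\mu_\nu,\nu_i}\bigr)\,d(\mu_\nu-\mu_\eta)$ by a ``four-point'' Kantorovich manipulation. The pointwise constraint \eqref{eq:condW} does not depend on the marginals, so the optimal couple $(\phi^{\mu_\nu,\nu_i},\psi^{\mu_\nu,\nu_i})$ for $(\mu_\nu,\nu_i)$ is still admissible for the swapped pair $(\mu_\eta,\eta_{\sigma(i)})$: subtracting the optimality identity from the admissibility bound, and doing the same with the roles of the two couples exchanged, I obtain two inequalities whose sum cancels the $W_2^2$ terms and leaves
\[
\int\bigl(\phi^{\mu_\eta,\eta_{\sigma(i)}}-\phi^{\mu_\nu,\nu_i}\bigr)\,d(\mu_\nu-\mu_\eta)\;\le\;\int \bigl(\psi^{\mu_\nu,\nu_i}-\psi^{\mu_\eta,\eta_{\sigma(i)}}\bigr)\,d(\nu_i-\eta_{\sigma(i)}).
\]
This step transfers the entire problem from the barycenter pair to the data pair $(\nu_i,\eta_{\sigma(i)})$.

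The main remaining obstacle is to bound each integral $\int(\psi^{\mu_\nu,\nu_i}-\psi^{\mu_\eta,\eta_{\sigma(i)}})\,d(\nu_i-\eta_{\sigma(i)})$ by $2\,W_2(\nu_i,\eta_{\sigma(i)})$. This is where one must exploit the $c$-concavity of Kantorovich potentials for the quadratic cost: up to adding a quadratic, each $\psi^{\mu,\nu}$ is a Legendre transform of a convex function and inherits Lipschitz regularity on the convex domain $\Omega$, which combined with the Kantorovich--Rubinstein duality $W_1(\rho,\rho')=\sup_{\mathrm{Lip}(f)\le 1}\int f\,d(\rho-\rho')$ and with $W_1\le W_2$ produces the desired pointwise bound. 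Summing over $i$, dividing by $\gamma n$, and taking the infimum over $\sigma\in\mathcal S_n$ then delivers \eqref{eq:boundperm}.
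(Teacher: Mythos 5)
Your proposal is correct and takes essentially the same route as the paper: subgradient inequalities at both minimizers summed to isolate $d_E$, transfer of the bound to the data measures via Kantorovich duality (your four-point admissibility/optimality argument is exactly the paper's computation with the optimal plans and the $c$-transformed potentials, repackaged), then the Lipschitz control of the dual potentials combined with Kantorovich--Rubinstein, $W_1\le W_2$, and the permutation reindexing. The only caveat, which the paper handles by passing to the pairs $(\phi^{cc},\phi^c)$, is that the optimal couples must be taken in $c$-transformed form so that they are admissible for (and integrable against) the swapped pair of measures, and your final Lipschitz step is precisely the paper's ``$1$-Lipschitz'' argument.
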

The proof of  Theorem \ref{stability} is given in  Appendix \ref{App_stab}.  
To better interpret the upper bound \eqref{eq:boundperm}, we need the notion of Kantorovich transport distance $\mathcal{T}_{W_2}$ on the metric space $(\PP_2(\Omega),W_2)$, see  \cite{villani2003topics}. 
For $\P,\mathbb{Q}\in W_2(\PP_2(\Omega))$ endowed with the Wasserstein distance $W_2$, we have that
\[\mathcal{T}_{W_2}(\P,\mathbb{Q}):=\underset{\Pi}{\inf}\int_{\PP_2(\Omega)\times\PP_2(\Omega)}W_2(\mu,\nu)d\Pi(\mu,\nu),\]
where the minimum is taken over all probability measures $\Pi$ on the product space $\PP_2({\Omega})\times\PP_2(\Omega)$  with marginals $\P$ and $\mathbb{Q}$. Since $\P_n^{\nu}$ and $\P_n^{\eta}$ are discrete probability measures supported on $\PP_2(\Omega)$, it follows that the upper bound \eqref{eq:boundperm} in Theorem \ref{stability} can also be written as (by Birkhoff's theorem for bi-stochastic matrices, see e.g.\ \cite{villani2003topics})
\[d_E(\mu_{\nu},\mu_{\eta})\le \frac{4\diam(\Omega)}{\gamma} \mathcal{T}_{W_2}(\P_n^{\nu},\P_n^{\eta}).\]
Hence the above upper bound means that the Bregman divergence between the penalized Wasserstein barycenters $\mu_{\nu}$ and $\mu_{\eta}$ is controlled by the Kantorovich transport distance between the distributions $\P_n^{\nu}$ and $\P_n^{\eta}$.

\subsection{Discussion on the stability Theorem}\label{subsec:discussion}
Theorem \ref{stability} is of particular interest   in the  setting where the $\nu_i$'s and $\eta_i$'s are discrete probability measures on $\R^{d}$. If we assume  that $\nu_{i}=\frac{1}{p}\sum_{j=1}^{p}\delta_{\bX_{i,j}}$ and $\eta_{i}=\frac{1}{p}\sum_{j=1}^{p}\delta_{\bY_{i,j}}$ where $(\bX_{i,j})_{1\le i\le n;1\le j\le p}$ and $(\bY_{i,j})_{1\le i\le n;1\le j\le p}$ are (possibly random) vectors in $\R^{d}$, then by \eqref{eq:boundperm},
\begin{align*}
d_E(\mu_{\nu},\mu_{\eta}) &\le \frac{4\diam(\Omega)}{\gamma n} \ \underset{\sigma\in\mathcal{S}_n}{\inf} \ \sum_{i=1}^n\left(\underset{\lambda\in\mathcal{S}_{p}}{\inf}\left\{\frac{1}{p}\sum_{j=1}^{p}\vert \bX_{i,j}-\bY_{\sigma(i),\lambda(j)}\vert^2\right\}\right)^{1/2}
\end{align*}
where computing $W_2$ becomes an assignment task through the estimation of permutations $\sigma$ and $\lambda$.

Theorem \ref{stability} is also  useful to compare the 
penalized Wasserstein barycenters respectively obtained  from  data made of $n$ a.c.\ probability measures $\nu_{1},\ldots,\nu_{n}$  and  from their empirical counterpart $\bnu_{p_{i}} = \frac{1}{p_{i}} \sum_{j=1}^{p_{i}} \delta_{\bX_{i,j}}$, where $(\bX_{i,j})_{j=1,\ldots,p_i}$ are iid and generated from $\nu_i$. 
Denoting as $\hat{\bmu}_{n,p}^{\gamma}$  the random density satisfying
$$
\hat{\bmu}_{n,p}^{\gamma} = \underset{\mu\in\PP_{2}(\Omega)}{\text{argmin}}\  \frac{1}{n} \sum_{i=1}^{n} W_2^2\left(\mu,\frac{1}{p_{i}} \sum_{j=1}^{p_{i}} \delta_{\bX_{i,j}} \right) +\gamma E(\mu),
$$
it  follows from  inequality \eqref{eq:boundperm} that
\begin{equation}
\E \left(  d^{2}_E\left(\mu_{\P_n^{\nu}}^{\gamma},\hat{\bmu}_{n,p}^{\gamma}\right)  \right) \le \frac{16\diam(\Omega)}{\gamma^{2} n}   \sum_{i=1}^n \E \left( W_2^2(\nu_i,\bnu_{p_{i}}) \right).\label{eq:discrete2}
\end{equation}
This result allows to discuss the rate of convergence (for the squared symmetric Bregman divergence)  of $\hat{\bmu}_{n,p}^{\gamma}$ to $\mu_{\P_n^{\nu}}^{\gamma}$ as a function of the rate of convergence (for the squared Wasserstein distance) of the empirical measure  $\bnu_{p_{i}}$ to $\nu_{i}$ for each $1 \le i \le n$  (in the asymptotic setting where $p = \min_{1 \le i \le n} p_{i}$ is let going to infinity).

As an illustrative example, in the one-dimensional case  $d=1$ and  for absolutely continuous measures, one may use
 Theorem 5.1 in  \cite{W1}, to obtain that 
$$
\E \left( W_2^2(\nu_i,\bnu_{p_{i}}) \right) \leq \frac{2}{p_{i}+1}K(\nu_i), \mbox{ with } K(\nu_i) = \int_{\Omega} \frac{F_{i}(x)(1-F_{i}(x))}{f_{i}(x)} dx,
$$
where $f_{i}$ is the pdf of $\nu_i$, and $F_{i}$ denotes its cumulative distribution function. Therefore, provided that $K(\nu_{i})$ is finite for each $1 \leq i \leq n$, one obtains the following rate of convergence of $\hat{\bmu}_{n,p}^{\gamma}$ to $\mu_{\P_n^{\nu}}^{\gamma}$ for $d=1$
\begin{equation}
\E \left(  d^{2}_E\left(\mu_{\P_n^{\nu}}^{\gamma},\hat{\bmu}_{n,p}^{\gamma}\right)  \right) \le \frac{32\diam(\Omega)}{\gamma^{2} n}   \sum_{i=1}^n \frac{K(\nu_i)}{p_{i}+1} \leq  \frac{32\diam(\Omega)}{\gamma^{2}} \left( \frac{1}{n}   \sum_{i=1}^n  K(\nu_i) \right) (p+1)^{-1}. \label{eq:exstability}
\end{equation}
Rate of convergence in $W_2$ distance between a discrete measure and its empirical counterpart are also given in one-dimension in \cite{W1}.
When the measures $\nu_{1},\ldots,\nu_{n}$ are supported on $\R^{d}$ with $d \geq 2$, we refer to the discussion in Section \ref{sec:discuss} which uses the results in \cite{fournier:hal-00915365} on the rate of convergence of an empirical measure in Wasserstein distance to derive rates of convergence for $d_E\left(\mu_{\P_n^{\nu}}^{\gamma},\hat{\bmu}_{n,p}^{\gamma}\right)$.

\section{Convergence properties of penalized empirical barycenters} \label{sec:conv}
In this section, we study for $\Omega\subset \R^d$ compact the convergence of the penalized  Wasserstein barycenter of a set $\bnu_1,\ldots,\bnu_n$ of independent random measures sampled from a distribution $\P$   towards a minimizer of $J_{\P}^0$, i.e.\ a population Wasserstein barycenter of the probability distribution $\P\in W_2(\PP_2(\Omega))$. Throughout this section, it is assumed that $E$ is a smooth relative $G$-functional so that it satisfies Assumption  \ref{hyp_E}. We first introduce and recall some notations.
\begin{defi}
For $\bnu_1,\ldots,\bnu_n$ iid random measures in $\PP_2(\Omega)$ sampled from a distribution $\P \in W_2(\PP_2(\Omega))$, we set $\P_n=\frac{1}{n}\sum_{i=1}^n\delta_{\bnu_i}$. Moreover, we use the notation  (with $\gamma > 0$)
\begin{align}
\label{fPn_gamma}
\bmu_{\P_n}^{\gamma}& = \underset{\mu\in\PP_{2}(\Omega)}{\text{argmin}}\ J_{\P_n}^{\gamma}(\mu)=\int W_2^2(\mu,\nu)d\P_n(\nu)+\gamma E(\mu)\\
\label{fP_gamma}
\mu_{\P}^{\gamma}& = \underset{\mu\in\PP_{2}(\Omega)}{\text{argmin}}\ J_{\P}^{\gamma}(\mu)=\int W_2^2(\mu,\nu)d\P(\nu)+\gamma E(\mu)\\
\label{fP_0}
\mu_{\P}^0&\in\underset{\mu\in\PP_{2}(\Omega)}{\text{argmin}}\ J_{\P}^0(\mu)=\int W_2^2(\mu,\nu)d\P(\nu),
\end{align}
that will be respectively referred as to the penalized empirical Wasserstein barycenter \eqref{fPn_gamma}, the penalized population Wasserstein  barycenter \eqref{fP_gamma} and the population Wasserstein  barycenter \eqref{fP_0}. 
\end{defi}

\begin{rmq} \label{rmq:uniqueness}
Thanks to Theorem \ref{exist_discrete}, one has that the penalized Wasserstein barycenters $\bmu_{\P_n}^{\gamma}$ and $\mu_{\P}^{\gamma}$ are well defined in the sense that they are the unique minimizers of $J_{\P_n}^{\gamma}$ and $J_{\P}^{\gamma}$ respectively. By Theorem 2 in \cite{gouic2015existence}, there exists a population Wasserstein barycenter  $\mu_{\P}^0$ but it is not necessarily unique. Nevertheless, as argued in \cite{AGUEH2017812}, a sufficient condition for the uniqueness of $\mu_{\P}^0$ is to assume that the distribution $\P$ gives a strictly positive mass to the set of a.c.\ measures with respect to the Lebesgue measure. Moreover, if $\P$ is supported on the set of measures $\PP_2(\Omega)\cap \mathbb{L}_q(\Omega)$ for some $q\in(1,+\infty)$ (i.e. $\PP_2(\Omega)$ measures with $\mathbb{L}_q(\Omega)$ densities), it follows that  $\mu_{\P}^0$  admits a density in $\mathbb{L}_q(\Omega)$. This property is also claimed in \cite{AGUEH2017812}, and proved from displacement convexity arguments of the $\mathbb{L}_q(\Omega)$ norm (see \cite{agueh2011barycenters} for the discrete case).
\end{rmq}  

In what follows, we discuss  some convergence results on the  penalized Wasserstein barycenters $\mu_{\P}^{\gamma}$  as $\gamma$ tends to $0$ and  $\bmu_{\P_n}^{\gamma}$ as $n$ tends to $+\infty$. To this end, we will need tools borrowed from the empirical process theory (see \cite{van1996weak}).
\begin{defi} \label{def:covnumber}
Let $\mathcal{F}=\{f:U\mapsto \R\}$ be a class of real-valued functions defined on a given set $U$, endowed with a norm $\| \cdot \|$. An envelope function $F$ of $\mathcal{F}$ is any function $u\mapsto F(u)$ such that $\vert f(u) \vert\le F(u)$ for every $u\in U$ and $f\in\mathcal{F}$. The minimal envelope function is $u\mapsto\sup_f \vert f(u)\vert$. The covering number $N(\epsilon,\mathcal{F},\Vert\cdot\Vert)$ is the minimum number of balls $\{\Vert g-f\Vert< \epsilon\}$ of radius $\epsilon$ and center $g$ needed to cover the set $\mathcal{F}$. The metric entropy is the logarithm of the covering number. Finally, we define
\begin{equation}
\label{def_metric_entropy}
I(\delta,\mathcal{F})=\underset{Q}{\sup}\int_0^{\delta}\sqrt{1+\log N(\epsilon \Vert F\Vert_{\mathbb{L}_2(Q)},\mathcal{F},\Vert \cdot\Vert_{\mathbb{L}_2(Q)})}d\epsilon
\end{equation}
where the supremum is taken over all discrete probability measures $Q$ supported on $U$ with $\Vert F\Vert_{\mathbb{L}_2(Q)}=\left(\int \vert F(u)\vert^2 dQ(u)\right)^{1/2}>0$. The term $I(\delta,\mathcal{F})$ is essentially the integral of the square root of the metric entropy along the radius of the covering balls of $\mathcal{F}$.
\end{defi}
The proof of the following theorems are given in Appendix \ref{App_conv}.

\subsection{Convergence of \texorpdfstring{$\mu_{\P}^{\gamma}$ towards $\mu_{\P}^0$}  \ \  } \label{sec:convbias}
We here present  convergence results  of the penalized population Wasserstein barycenter $\mu_{\P}^{\gamma}$ toward $\mu_{\P}^0$ as $\gamma \to 0$. This is classically referred to as the convergence of the bias term in nonparametric statistic.
\begin{thm}\label{th:biais}
Suppose that $\Omega$ is a compact of $\R^d$. Then, every limit of a subsequence of $(\mu_{\P}^{\gamma})_{\gamma}$  in the metric space $(\PP_{2}(\Omega),W_2)$ is a population Wasserstein barycenter. If we further assume that $\mu_{\P}^0$ is unique, then one has that
$$\lim\limits_{\gamma \rightarrow 0} W_2(\mu_{\P}^{\gamma},\mu_{\P}^0)=0.$$
Moreover, if $\mu_{\P}^0 \in \DD(E)$  and  $\nabla E(\mu_{\P}^0)$ is a continuous function on $\Omega$ then
$$\lim\limits_{\gamma \rightarrow 0} D_E(\mu_{\P}^{\gamma},\mu_{\P}^0)= 0,$$
where $D_E$ is the non-symmetric Bregman divergence defined by
\begin{equation}
D_E(\mu_{\P}^{\gamma},\mu_{\P}^0) = E(\mu_{\P}^{\gamma}) - E(\mu_{\P}^0) - \langle \nabla E(\mu_{\P}^0) , \mu_{\P}^{\gamma} - \mu_{\P}^0 \rangle.  \label{eq:nonsymDE}
\end{equation}

\end{thm}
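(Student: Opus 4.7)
The plan rests on compactness of $(\PP_2(\Omega),W_2)$ (since $\Omega$ is compact), on the $W_2$-lower semicontinuity of $E$ from Assumption \ref{hyp_E}, and on the $W_2$-continuity of $\mu\mapsto \int W_2^2(\mu,\nu)\,d\P(\nu)$ (which follows from $W_2\le \mathrm{diam}(\Omega)$ and dominated convergence). By compactness, the family $(\mu_\P^\gamma)_\gamma$ admits $W_2$-cluster points as $\gamma\to 0$; fix one, say $\mu_\P^{\gamma_k}\to\mu^\ast$ along a sequence $\gamma_k\to 0^+$.

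To show $\mu^\ast$ minimizes $J_\P^0$, I would exploit the optimality of $\mu_\P^{\gamma_k}$: for every $\mu\in\DD(E)$, discarding the nonnegative term $\gamma_k E(\mu_\P^{\gamma_k})$,
$$
\int W_2^2(\mu_\P^{\gamma_k},\nu)\,d\P(\nu)\,\le\, \int W_2^2(\mu,\nu)\,d\P(\nu)+\gamma_k E(\mu).
$$
Passing to the limit in $k$ yields $J_\P^0(\mu^\ast)\le J_\P^0(\mu)$ for every $\mu\in\DD(E)$. A density argument, namely approximating any $\mu\in\PP_2(\Omega)$ in $W_2$ by mollified densities in the convex set $\Omega$ (the superlinear growth of $G$ ensures the approximants lie in $\DD(E)$), extends the inequality to all of $\PP_2(\Omega)$, so $\mu^\ast$ is a population Wasserstein barycenter. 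If $\mu_\P^0$ is unique, the standard subsequence argument in the compact metric space $(\PP_2(\Omega),W_2)$ promotes this to $W_2(\mu_\P^\gamma,\mu_\P^0)\to 0$.

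For the non-symmetric Bregman divergence I would split
$$
D_E(\mu_\P^\gamma,\mu_\P^0)\,=\,\bigl(E(\mu_\P^\gamma)-E(\mu_\P^0)\bigr)-\langle\nabla E(\mu_\P^0),\,\mu_\P^\gamma-\mu_\P^0\rangle.
$$
The linear term tends to $0$ because $\nabla E(\mu_\P^0)$ is continuous (hence bounded) on the compact set $\Omega$ and $W_2$-convergence entails weak convergence $\mu_\P^\gamma\rightharpoonup\mu_\P^0$. For the first term, testing the optimality of $\mu_\P^\gamma$ at $\mu_\P^0$ gives
$$
\gamma\bigl(E(\mu_\P^\gamma)-E(\mu_\P^0)\bigr)\,\le\, J_\P^0(\mu_\P^0)-J_\P^0(\mu_\P^\gamma)\,\le\,0
$$
since $\mu_\P^0$ minimizes $J_\P^0$; thus $E(\mu_\P^\gamma)\le E(\mu_\P^0)$ for every $\gamma>0$, and the matching liminf bound $E(\mu_\P^0)\le\liminf_{\gamma\to 0}E(\mu_\P^\gamma)$ is precisely the $W_2$-lower semicontinuity of $E$ along $\mu_\P^\gamma\to\mu_\P^0$. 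Combining the two, $E(\mu_\P^\gamma)\to E(\mu_\P^0)$, and $D_E(\mu_\P^\gamma,\mu_\P^0)\to 0$.

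The step I expect to require the most care is the density of $\DD(E)$ in $(\PP_2(\Omega),W_2)$ used to promote the optimality inequality from $\DD(E)$ to the whole space: for a generic relative $G$-functional this demands an explicit smooth approximation in $\Omega$ whose density lies in the effective domain of $G$, and it is the superlinear growth of $G$ that keeps the corresponding integrals finite. The rest of the argument is a routine combination of lower-semicontinuity, optimality testing, and compactness.
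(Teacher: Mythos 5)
Your argument is, at its core, the same as the paper's: the paper phrases the first part as $\Gamma$-convergence of $J_{\P}^{\gamma}$ to $J_{\P}^{0}$ (liminf inequality from $W_2$-lower semicontinuity, limsup inequality from the \emph{constant} recovery sequence, equi-coercivity from compactness of $(\PP_2(\Omega),W_2)$) followed by the fundamental theorem of $\Gamma$-convergence, which is exactly your ``extract a cluster point, test optimality of $\mu_{\P}^{\gamma_k}$ against a fixed competitor, drop the nonnegative term $\gamma_k E(\mu_{\P}^{\gamma_k})$, and pass to the limit'' unwound by hand. The Bregman step is also essentially identical: both you and the paper derive $E(\mu_{\P}^{\gamma})\le E(\mu_{\P}^{0})$ by combining the optimality of $\mu_{\P}^{\gamma}$ for $J_{\P}^{\gamma}$ with the optimality of $\mu_{\P}^{0}$ for $J_{\P}^{0}$, and both kill the linear term $\langle \nabla E(\mu_{\P}^{0}),\mu_{\P}^{\gamma}-\mu_{\P}^{0}\rangle$ using continuity (hence boundedness) of $\nabla E(\mu_{\P}^{0})$ on the compact $\Omega$ together with the weak convergence implied by $W_2$-convergence; the paper then simply upper-bounds $D_E(\mu_{\P}^{\gamma},\mu_{\P}^{0})$ by that linear term (using $D_E\ge 0$), whereas you additionally invoke lower semicontinuity of $E$ to get $E(\mu_{\P}^{\gamma})\to E(\mu_{\P}^{0})$ — a harmless and slightly stronger variant.

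The one step where you genuinely depart from the paper is the extension of $J_{\P}^{0}(\mu^{\ast})\le J_{\P}^{0}(\mu)$ from $\mu\in\DD(E)$ to all of $\PP_2(\Omega)$, and your justification of it is not right as stated. Superlinear growth of $G$ is irrelevant to whether mollified densities lie in $\DD(E)$; what matters is that $G$ is finite-valued on $[0,+\infty)$ (true here because Section 4 assumes $E$ is a \emph{smooth} relative $G$-functional, so $G$ is differentiable, hence finite, and a bounded density on compact $\Omega$ then has finite energy), that the reference measure $\lambda$ is the Lebesgue measure on a full-dimensional $\Omega$ (for a general $\lambda$, $\DD(E)\subset\{\mu\ll\lambda\}$ need not be $W_2$-dense), and that the mollification is combined with a contraction toward an interior point so the approximant stays supported in $\Omega$. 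With those caveats your density argument can be completed, and it is in fact more careful than the paper, whose constant recovery sequence also only works for $\mu\in\DD(E)$ (for $E(\mu)=+\infty$ one has $J_{\P}^{\gamma}(\mu)\equiv+\infty$, so the claimed $\Gamma$-limsup inequality silently requires exactly the density/diagonal argument you flag). So: same architecture and correct conclusion under the section's standing assumptions, but the sentence attributing the admissibility of the approximants to superlinear growth should be replaced by the finiteness of $G$ plus an interior-contraction-and-mollification construction tied to $\lambda=dx$.
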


\subsection{Convergence of \texorpdfstring{$\bmu_{\mathbb{P}_n}^{\gamma}$ towards $\mu_{\mathbb{P}}^{\gamma}$} \ \ }\label{sec:conv_var}

We establish a general result  about the convergence to zero of ${\E}(d_E^2(\bmu_{\P_n}^{\gamma},\mu_{\P}^{\gamma}))$ that is referred to as the variance term. Complementary results on the rate of convergence of this variance term are then given. These additional results are shown to be useful to obtain a data-driven choice for the regularization paper $\gamma$ as detailed in the companion paper \cite{BCP18} where we provide numerical experiments illustrating the use of penalized Wasserstein barycenters for data analysis.

\begin{thm}
\label{th:convergence_var}
 If $\Omega$ is a compact of $\R^d$, then, for any $\gamma >0$, one has that  
\begin{equation}
\lim\limits_{n\rightarrow \infty} {\E}(d_E^2(\bmu_{\P_n}^{\gamma},\mu_{\P}^{\gamma})) = 0
\end{equation}
\end{thm}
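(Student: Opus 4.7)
The plan is to combine an integrated form of the subgradient's inequality (Theorem~\ref{thm_subgradient}) with a uniform empirical-process argument over the class $\{W_2^2(\mu',\cdot):\mu'\in\PP_2(\Omega)\}$, and exploit the compactness of $\Omega$ (hence of $(\PP_2(\Omega),W_2)$) to conclude. Throughout, write $\hat{\bmu}:=\bmu_{\P_n}^\gamma$ and $\mu:=\mu_{\P}^\gamma$.

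Following the scheme underlying the proof of Theorem~\ref{stability}, I would first establish that the minimality of $\hat{\bmu}$ for $J_{\P_n}^\gamma$ and of $\mu$ for $J_{\P}^\gamma$ provides, $\P_n$-a.e.\ and $\P$-a.e.\ respectively, optimal Kantorovich potentials $\phi^{\hat{\bmu},\nu}$ and $\phi^{\mu,\nu}$ such that
$$\gamma\langle\nabla E(\hat{\bmu}),\hat{\bmu}-\mu\rangle \le -\int\!\!\int \phi^{\hat{\bmu},\nu}(x)\,d(\hat{\bmu}-\mu)(x)\,d\P_n(\nu),$$
$$\gamma\langle\nabla E(\mu),\mu-\hat{\bmu}\rangle \le -\int\!\!\int \phi^{\mu,\nu}(x)\,d(\mu-\hat{\bmu})(x)\,d\P(\nu).$$
Summing and invoking Kantorovich duality---the identity $\int\phi^{\hat{\bmu},\nu}d\hat{\bmu}+\int\psi^{\hat{\bmu},\nu}d\nu=W_2^2(\hat{\bmu},\nu)$ together with the admissibility bound $\int\phi^{\hat{\bmu},\nu}d\mu+\int\psi^{\hat{\bmu},\nu}d\nu\le W_2^2(\mu,\nu)$, and their analogues with $\hat{\bmu}$ and $\mu$ swapped---produces the deterministic inequality
$$\gamma\, d_E(\hat{\bmu},\mu)\;\le\;\int\!\bigl(W_2^2(\mu,\nu)-W_2^2(\hat{\bmu},\nu)\bigr)\,d(\P_n-\P)(\nu),$$
which is the direct analogue for the pair $(\P_n,\P)$ of the estimate derived in the proof of Theorem~\ref{stability}.

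Setting $g_{\mu'}(\nu):=W_2^2(\mu',\nu)$, the right-hand side is bounded in absolute value by $2\sup_{\mu'\in\PP_2(\Omega)}|(\P_n-\P)g_{\mu'}|$, so squaring and taking expectations yields
$$\gamma^2\,\E\!\bigl[d_E^2(\hat{\bmu},\mu)\bigr]\;\le\; 4\,\E\!\Bigl[\sup_{\mu'\in\PP_2(\Omega)}|(\P_n-\P)g_{\mu'}|^2\Bigr].$$
Compactness of $\Omega$ gives $D:=\mathrm{diam}(\Omega)<\infty$, so $|g_{\mu'}|\le D^2$ uniformly, and the triangle inequality for $W_2$ yields the Lipschitz estimate $|g_{\mu'}(\nu)-g_{\mu''}(\nu)|\le 2D\,W_2(\mu',\mu'')$ uniformly in $\nu$. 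Since $(\PP_2(\Omega),W_2)$ is itself compact, for any $\epsilon>0$ one may pick a finite $\epsilon$-net $\{\mu'_1,\ldots,\mu'_K\}\subset\PP_2(\Omega)$ and bound
$$\sup_{\mu'}|(\P_n-\P)g_{\mu'}|\;\le\;\max_{1\le k\le K}|(\P_n-\P)g_{\mu'_k}| + 4D\epsilon.$$
The finite maximum vanishes almost surely by the strong law of large numbers, and the arbitrariness of $\epsilon$ promotes this to almost-sure convergence of the full supremum to $0$. As the supremum is uniformly bounded by $2D^2$, dominated convergence lifts almost-sure convergence to $L^2$ convergence, completing the proof.

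I expect the main obstacle to be the first step: Theorem~\ref{thm_subgradient} is stated only for a single target $\nu$, so the integrated inequality must be justified either by adapting the variational argument directly (which involves a measurable selection $\nu\mapsto\phi^{\mu,\nu}$ of Kantorovich potentials) or by first approximating $\P$ by discrete measures and passing to the limit using Theorem~\ref{stability} together with continuity of optimal potentials in the target measure. Everything else---the Kantorovich manipulations, the empirical-process bound, and the $L^2$ upgrade via dominated convergence---is routine once this variational inequality is in hand.
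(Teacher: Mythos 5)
Your proposal is correct and follows essentially the same route as the paper: subgradient inequalities at both $\bmu_{\P_n}^{\gamma}$ and $\mu_{\P}^{\gamma}$, Kantorovich duality to obtain the deterministic bound $\gamma\, d_E(\bmu_{\P_n}^{\gamma},\mu_{\P}^{\gamma})\le \int\bigl(W_2^2(\mu_{\P}^{\gamma},\nu)-W_2^2(\bmu_{\P_n}^{\gamma},\nu)\bigr)d(\P_n-\P)(\nu)$, and then a uniform law of large numbers over the class $\{W_2^2(\mu',\cdot):\mu'\}$ exploiting compactness of $(\PP_2(\Omega),W_2)$, followed by dominated convergence. The obstacle you flag is resolved in the paper exactly along your first suggested route: continuity of $\mu\mapsto\E[W_2^2(\mu,\bnu)]$ plus Rockafellar's interchange theorem $\partial_1\E[W_2^2(\mu_{\P}^{\gamma},\bnu)]=\E[\partial_1W_2^2(\mu_{\P}^{\gamma},\bnu)]$ (together with Fubini arguments for the potentials), which supplies the integrated subgradient inequality you need. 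The only other difference is cosmetic: where you run an elementary finite $\epsilon$-net plus strong law argument (valid, thanks to the $2D$-Lipschitz dependence of $W_2^2(\mu',\nu)$ on $\mu'$ and compactness of the Wasserstein space), the paper bounds $\mathbb{L}_1(\P_n)$-covering numbers of the class and invokes a Glivenko--Cantelli theorem from van der Vaart and Wellner, and it treats the term indexed by the deterministic $\mu_{\P}^{\gamma}$ separately by an iid variance bound; both give the stated convergence, the paper's machinery being chosen with the subsequent rate results (Theorems \ref{rate_convergence}--\ref{th:rateSobolev}) in mind.
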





We can actually provide a rate of convergence for this variance term which deeply depends  on compactness properties of the space of measures considered in the minimization problem \eqref{J_discret}. To this end, we introduce the class of functions
$$
\mathcal{H}=\{h_{\mu}:\nu\in\PP_2(\Omega)\mapsto W_2^2(\mu,\nu)\in\R\ \mbox{for} \ \mu\in\PP_2(\Omega)\}.
$$

\begin{thm}
\label{rate_convergence}
If $\Omega$ is a compact of $\R^d$, then one has that
\begin{equation}
\label{rate_conv1}
{\E}(d_E^2(\bmu_{\P_n}^{\gamma},\mu_{\P}^{\gamma}))\le \frac{CI(1,\mathcal{H})\Vert H\Vert_{\mathbb{L}_2(\P)}}{\gamma^2n}
\end{equation}
where $C$ is a positive constant depending on $\Omega$, $H$ is an envelope function of $\mathcal{H}$ and $I(1,\mathcal{H})$ is defined in \eqref{def_metric_entropy}.
\end{thm}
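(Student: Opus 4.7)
The plan is to combine subgradient inequalities at the two minimizers $\bmu_{\P_n}^{\gamma}$ and $\mu_{\P}^{\gamma}$ in order to reduce the statement to a bound on an empirical process indexed by the class $\mathcal{H}$, and then to apply a chaining bound involving the Dudley entropy integral $I(1,\mathcal{H})$.

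First, I would observe that the argument behind Theorem \ref{thm_subgradient} extends directly to the functional $J_{\mathbb{Q}}^{\gamma}$ for any $\mathbb{Q}\in W_2(\PP_2(\Omega))$: its minimizer $\mu^\star$ satisfies
\[
\gamma\, \langle \nabla E(\mu^\star),\, \mu^\star - \eta \rangle \;\le\; -\int \!\!\int \phi^{\mu^\star,\nu}\, d(\mu^\star - \eta)\, d\mathbb{Q}(\nu)
\]
for every $\eta\in\PP_2(\Omega)$. Applying this once with $\mathbb{Q}=\P_n$, $\mu^\star=\bmu_{\P_n}^{\gamma}$, $\eta=\mu_{\P}^{\gamma}$, and once with $\mathbb{Q}=\P$, $\mu^\star=\mu_{\P}^{\gamma}$, $\eta=\bmu_{\P_n}^{\gamma}$, and then summing, produces $\gamma\, d_E(\bmu_{\P_n}^{\gamma},\mu_{\P}^{\gamma})$ on the left-hand side. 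Exploiting that each Kantorovich potential $\phi^{\mu,\nu}$ is a subgradient of the convex map $\mu'\mapsto W_2^2(\mu',\nu)$ at $\mu$, namely $\int \phi^{\mu,\nu}\, d(\mu'-\mu)\le W_2^2(\mu',\nu)-W_2^2(\mu,\nu)$, to upper-bound the right-hand side, and writing $h_\mu(\nu)=W_2^2(\mu,\nu)$, I would reach the key deterministic inequality
\[
\gamma\, d_E(\bmu_{\P_n}^{\gamma},\mu_{\P}^{\gamma}) \;\le\; (\P_n-\P)[h_{\mu_{\P}^{\gamma}}] - (\P_n-\P)[h_{\bmu_{\P_n}^{\gamma}}] \;\le\; 2\sup_{h\in\mathcal{H}}\bigl|(\P_n-\P)h\bigr|.
\]

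Squaring and taking expectations, the theorem then reduces to bounding the second moment of $\|\P_n-\P\|_{\mathcal{H}}:=\sup_{h\in\mathcal{H}}\bigl|(\P_n-\P)h\bigr|$. Since $\Omega$ is compact, the class $\mathcal{H}$ admits a bounded envelope $H$ with $\|H\|_{\mathbb{L}_2(\P)}<\infty$; I would then apply a symmetrization inequality followed by a Dudley-type chaining argument controlled by the entropy integral $I(1,\mathcal{H})$ of Definition \ref{def:covnumber}, to obtain a bound of the form
\[
\E\bigl[\|\P_n-\P\|_{\mathcal{H}}^{2}\bigr] \;\le\; \frac{C\, I(1,\mathcal{H})\, \|H\|_{\mathbb{L}_2(\P)}}{n},
\]
which, combined with the deterministic inequality above, yields the announced estimate \eqref{rate_conv1}.

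The main obstacle, in my view, is twofold. On the analytic side, the integrated subgradient inequality for $J_{\mathbb{Q}}^{\gamma}$ requires a jointly measurable selection of Kantorovich potentials $(\mu,\nu)\mapsto \phi^{\mu,\nu}$; this is a standard but delicate measurable-selection issue, and one must check that the resulting subgradient is indeed integrable with respect to $\mathbb{Q}$. On the probabilistic side, achieving the correct order $1/n$ in the second moment cannot be obtained by merely squaring the Dudley bound on $\E[\|\P_n-\P\|_{\mathcal{H}}]$ via Jensen's inequality; one needs to run the chaining argument directly at the $L_2$ level, or invoke a Talagrand-type moment inequality for suprema of empirical processes, which is precisely where the uniform boundedness afforded by the compactness of $\Omega$ enters the picture.
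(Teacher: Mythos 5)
Your proposal is correct and follows essentially the same route as the paper: the subgradient characterizations of the two minimizers (with the Kantorovich potential as a subgradient of $\mu\mapsto W_2^2(\mu,\nu)$, and with Rockafellar's interchange of subdifferential and expectation plus a Fubini argument handling the measurability/integrability issue you flag) yield exactly the paper's deterministic bound $\gamma\, d_E(\bmu_{\P_n}^{\gamma},\mu_{\P}^{\gamma}) \le (\P_n-\P)(h_{\mu_{\P}^{\gamma}}) - (\P_n-\P)(h_{\bmu_{\P_n}^{\gamma}})$, after which the paper controls the supremum over $\mathcal{H}$ at the second-moment level via Theorem 2.14.1 of \cite{van1996weak}, which is precisely the symmetrization-plus-chaining-at-$\mathbb{L}_2$ step you describe. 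The only cosmetic difference is that the paper bounds the term at the fixed measure $\mu_{\P}^{\gamma}$ by a direct iid variance computation instead of absorbing it into the supremum.
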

 
To complete this result in a satisfying way, one needs to prove that $I(1,\mathcal H)$ is bounded, which depends on the rate of convergence of the metric entropy towards infinity as the radius $\epsilon$ of the covering balls tends to zero.
  
\paragraph{The one-dimensional case} \label{sec:1Dcase}
Studying the metric entropy of the class $\mathcal{H}$ boils down to studying the metric entropy of the space $(\PP_2(\Omega),W_2)$. By approximating each measure by discrete ones, this corresponds to the metric entropy of the space of discrete distributions on $\Omega$, which is of order $1/\epsilon^{d}$ where $d$ is the dimension of $\Omega$ assumed to be compact (see e.g. \cite{nguyen2011wasserstein}). The term $I(1,\mathcal{H})$ appearing in \eqref{rate_conv1} is thus finite in the one dimensional case.

\begin{thm}
\label{th:rate1D}
If $\Omega$ is a compact subset of $\R$, then there exists a finite constant $C>0$ such that
$${\E}(d_E^2(\bmu_{\P_n}^{\gamma},\mu_{\P}^{\gamma}))\le \frac{C}{\gamma^2n}.$$
\end{thm}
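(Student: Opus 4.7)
The plan is to deduce Theorem \ref{th:rate1D} as a direct corollary of Theorem \ref{rate_convergence} by showing that in the one-dimensional setting with $\Omega$ compact, both the envelope norm $\Vert H\Vert_{\mathbb{L}_2(\P)}$ and the entropy integral $I(1,\mathcal{H})$ are finite.

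First, I would exhibit a concrete envelope for $\mathcal{H}$. Since $\Omega\subset\R$ is compact, let $D=\mathrm{diam}(\Omega)<\infty$. For every $\mu,\nu\in\PP_2(\Omega)$, any coupling is supported in $\Omega\times\Omega$, so $W_2^2(\mu,\nu)\le D^2$. Hence the constant function $H\equiv D^2$ is an envelope of $\mathcal{H}$, and $\Vert H\Vert_{\mathbb{L}_2(Q)}=D^2$ for every probability measure $Q$ on $\PP_2(\Omega)$. In particular $\Vert H\Vert_{\mathbb{L}_2(\P)}=D^2<\infty$, which takes care of one factor in the bound \eqref{rate_conv1}.

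Next I would control the covering numbers of $\mathcal{H}$ by those of $(\PP_2(\Omega),W_2)$. The map $\mu\mapsto h_\mu$ is Lipschitz in the uniform norm: for $\mu_1,\mu_2,\nu\in\PP_2(\Omega)$,
\[
|h_{\mu_1}(\nu)-h_{\mu_2}(\nu)|=|W_2^2(\mu_1,\nu)-W_2^2(\mu_2,\nu)|\le (W_2(\mu_1,\nu)+W_2(\mu_2,\nu))\,W_2(\mu_1,\mu_2)\le 2D\,W_2(\mu_1,\mu_2),
\]
using the reverse triangle inequality and $W_2\le D$. Consequently $\Vert h_{\mu_1}-h_{\mu_2}\Vert_{\mathbb{L}_2(Q)}\le 2D\,W_2(\mu_1,\mu_2)$ for every $Q$, and therefore
\[
N(\epsilon D^2,\mathcal{H},\Vert\cdot\Vert_{\mathbb{L}_2(Q)})\le N\!\left(\tfrac{\epsilon D^2}{2D},\PP_2(\Omega),W_2\right)=N\!\left(\tfrac{\epsilon D}{2},\PP_2(\Omega),W_2\right).
\]

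Finally, I would invoke the known estimate on the metric entropy of $(\PP_2(\Omega),W_2)$ for $\Omega$ a compact subset of $\R$, as alluded to in the discussion preceding the statement (see e.g.\ \cite{nguyen2011wasserstein}): in dimension $d=1$ one has $\log N(\epsilon,\PP_2(\Omega),W_2)\le C_1/\epsilon$ for some constant $C_1=C_1(\Omega)$. Substituting gives $\log N(\epsilon D^2,\mathcal{H},\Vert\cdot\Vert_{\mathbb{L}_2(Q)})\le 2C_1/(\epsilon D)$, uniformly in $Q$. Hence
\[
I(1,\mathcal{H})\le \int_0^1\sqrt{1+\tfrac{2C_1}{D\,\epsilon}}\,d\epsilon<\infty,
\]
since the singularity $\epsilon^{-1/2}$ is integrable at $0$. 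Plugging this finite value together with $\Vert H\Vert_{\mathbb{L}_2(\P)}=D^2$ into \eqref{rate_conv1} yields the claimed bound $C/(\gamma^2 n)$ for a suitable constant $C$ depending only on $\Omega$.

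The only non-routine ingredient is the one-dimensional metric entropy estimate for $(\PP_2(\Omega),W_2)$; everything else is bookkeeping. It is precisely the failure of this entropy bound to produce a convergent integral when $d\ge 2$ (where $\log N\asymp\epsilon^{-d}$ makes $\sqrt{\log N}$ non-integrable at $0$) that confines the argument to $d=1$, consistent with the comment made in the paper just before the statement.
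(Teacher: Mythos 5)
Your proposal is correct and follows essentially the same route as the paper: reduce to Theorem \ref{rate_convergence}, bound the envelope using the compactness of $\Omega$, use the reverse triangle inequality to transfer covering numbers from $\mathcal{H}$ to $(\PP_2(\Omega),W_2)$, and conclude from the one-dimensional entropy bound $\log N(\epsilon,\PP_2(\Omega),W_2)\lesssim 1/\epsilon$. The only difference is that where you cite this entropy bound as known, the paper proves it: for $d=1$ the quantile map $\mu\mapsto F_\mu^-$ isometrically identifies $(\PP_2(\Omega),W_2)$ with a class of uniformly bounded monotone functions in $\mathbb{L}_2([0,1])$, whose bracketing entropy is of order $1/\epsilon$ by Theorem 2.7.5 in \cite{van1996weak}.
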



\paragraph{The $d$-dimensional case with additional penalization.} \label{sec:genDcase}

In the case $d \ge 2$, the class of functions $\mathcal{H}=\{h_{\mu}: \mu\in\PP_2(\Omega)\}$ is too large to control the metric entropy and to  have a finite value for $I(1,\mathcal H)$. To tackle this issue, we impose more smoothness on the penalized Wasserstein barycenter.
More precisely, we  assume that $\Omega$ is a smooth and uniformly convex set, and  for a smooth relative $G$-functional  with reference measure $\lambda = dx$ (that we denote by $E_{G}$) we choose  the penalizing function
\begin{equation}
\label{Sobolev_penalty}
E(\mu) = \left\{\begin{array}{ll}
 E_{G}(\mu) + \Vert f\Vert_{H^{k}(\Omega)}^{2} = \int_{\Omega} G(f(x)) dx+\Vert f\Vert_{H^{k}(\Omega)}^{2}, & \mbox{if}\ f =\frac{d\mu}{dx} \ \mbox{and} \ f\geq\alpha, \\
+\infty & \mbox{otherwise.}
\end{array}\right.
\end{equation}
where $\Vert\cdot\Vert_{H^k(\Omega)}$ denotes the Sobolev norm associated to the $\mathbb{L}^2(\Omega)$ space and $\alpha>0$ is arbitrarily small. Remark that we could choose a linear combination with different weights for the relative $G$-functional and the squared Sobolev norm. Then, the following result holds.
\begin{thm} \label{th:rateSobolev}
Suppose that $\Omega$ is a compact and uniformly convex set with a $C^1$ boundary. Assume that the penalty function $E$ is given by \eqref{Sobolev_penalty} for some $\alpha > 0$ and $k > d-1$.  Then, there exists a finite contant $C > 0$ such that
$$
\E\left(d_{E_G}^2\left(\bmu_{\P_n}^{\gamma},\mu_{\P}^{\gamma}\right)\right) \le \E\left(d_{E}^2\left(\bmu_{\P_n}^{\gamma},\mu_{\P}^{\gamma}\right)\right) \le \frac{C}{\gamma^2n}. 
$$
\end{thm}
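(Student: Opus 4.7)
The penalty decomposes as $E = E_G + \|\cdot\|_{H^k(\Omega)}^2$, a sum of two smooth convex functionals, so that $\nabla E = \nabla E_G + \nabla(\|\cdot\|_{H^k}^2)$ and the symmetric Bregman divergence splits as $d_E = d_{E_G} + d_{\|\cdot\|_{H^k}^2}$. Since the Bregman divergence of any convex functional is nonnegative, this yields $d_{E_G} \le d_E$ pointwise and hence (squaring nonnegative quantities) the first inequality in the statement. The bulk of the work is therefore to show $\E(d_E^2(\bmu_{\P_n}^\gamma,\mu_\P^\gamma)) \le C/(\gamma^2 n)$.

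The idea is to replay the empirical process argument behind Theorem \ref{rate_convergence}, but on a much smaller class of functions than $\mathcal H$, reflecting the regularity enforced by the Sobolev penalty. A deterministic a priori Sobolev bound follows from optimality: picking any reference $\mu_0 \in \DD(E)$ (e.g.\ a regularized uniform distribution on $\Omega$),
\[
 \gamma\,\|f_{\bmu_{\P_n}^\gamma}\|_{H^k(\Omega)}^2 \le \gamma\,E(\bmu_{\P_n}^\gamma) \le J_{\P_n}^\gamma(\mu_0) \le \mathrm{diam}(\Omega)^2 + \gamma\,E(\mu_0),
\]
and the same bound holds for $\mu_\P^\gamma$. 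Consequently both penalized barycenters lie in
\[
 \mathcal M_R = \bigl\{\mu \in \PP_2(\Omega) : d\mu/dx \in H^k(\Omega),\ \|d\mu/dx\|_{H^k} \le R,\ d\mu/dx \ge \alpha \bigr\},
\]
with $R^2 = R(\gamma)^2 = \mathrm{diam}(\Omega)^2/\gamma + E(\mu_0)$. The subgradient inequality from Theorem \ref{thm_subgradient} together with Kantorovich duality then controls $d_E(\bmu_{\P_n}^\gamma,\mu_\P^\gamma)$, up to a factor $1/\gamma$, by the suprema of the centered process $(\P_n - \P)h_\mu$ indexed by $\mu \in \mathcal M_R$ alone, so the proof of Theorem \ref{rate_convergence} can be re-run with the full class $\mathcal H$ replaced by $\mathcal H_R = \{h_\mu : \mu \in \mathcal M_R\}$. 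The problem reduces to showing $I(1, \mathcal H_R) < +\infty$.

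For the entropy estimate, the elementary Lipschitz bound $|h_\mu(\nu) - h_{\mu'}(\nu)| \le 2\,\mathrm{diam}(\Omega)\,W_2(\mu,\mu')$ gives, for every discrete probability $Q$ on $\PP_2(\Omega)$,
\[
 N(\varepsilon, \mathcal H_R, \|\cdot\|_{\mathbb L_2(Q)}) \le N\!\left(\tfrac{\varepsilon}{2\,\mathrm{diam}(\Omega)},\, \mathcal M_R,\, W_2\right).
\]
Because $\Omega$ is smooth and uniformly convex and the densities in $\mathcal M_R$ are uniformly bounded below by $\alpha$ (and, by the Sobolev embedding granted by $k > d/2$ which is included in $k > d-1$ for $d \ge 2$, also uniformly bounded above), a quantitative optimal-transport inequality of Loeper/Peyre type allows one to control $W_2(\mu,\mu')$ by a negative Sobolev norm of $f_\mu - f_{\mu'}$. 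Classical Kolmogorov metric entropy bounds for embeddings of $B_R^{H^k(\Omega)}$ into weaker Sobolev spaces then provide a polynomial bound $\log N(\varepsilon, \mathcal H_R, \|\cdot\|_{\mathbb L_2(Q)}) \lesssim \varepsilon^{-\beta}$ with exponent $\beta < 2$ under the sharp hypothesis $k > d-1$, from which the finiteness of $I(1, \mathcal H_R)$ follows immediately.

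Inserting this into the restricted version of Theorem \ref{rate_convergence} and absorbing the dependence on $R(\gamma)$, $\alpha$, $\mathrm{diam}(\Omega)$ and $E(\mu_0)$ into a single constant $C$ gives the announced bound $C/(\gamma^2 n)$. The main obstacle lies in the entropy step: one has to pick a quantitative transport inequality, valid under the mere lower-bound assumption $f \ge \alpha$ and exploiting the smoothness and uniform convexity of $\Omega$, whose exponent combines with the Sobolev metric entropy estimate to land exactly on the threshold $k > d-1$ rather than on a slightly stronger one. A secondary technical point is to track the explicit $\gamma$-dependence of $R(\gamma)$ and verify that it is absorbed into the constant $C$ without altering the $1/n$ rate in $n$.
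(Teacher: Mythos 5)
Your skeleton matches the paper's: $d_{E_G}\le d_E$ because the Bregman divergence is additive over the decomposition \eqref{Sobolev_penalty} and each term is nonnegative, and the main bound is obtained by re-running Theorem \ref{rate_convergence} with the index set of $\mathcal{H}$ restricted by the regularity enforced by the penalty, so that everything hinges on the finiteness of the entropy integral $I(1,\cdot)$. The genuine gap is in how you make that integral finite. You restrict to the Sobolev ball $\mathcal{M}_R$ with $R(\gamma)^2=\mathrm{diam}(\Omega)^2/\gamma+E(\mu_0)$, control $W_2$ by a negative Sobolev norm (legitimate, thanks to $f\ge\alpha$), and invoke the entropy of the $H^k$-ball in $H^{-1}$. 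But that entropy scales like $\log N(\varepsilon,B_R^{H^k},H^{-1})\asymp (R/\varepsilon)^{d/(k+1)}$, hence $I(1,\mathcal{H}_R)\lesssim 1+R^{d/(2(k+1))}\sim\gamma^{-d/(4(k+1))}$. This dependence on $R(\gamma)$ cannot be ``absorbed into a single constant $C$'': it yields a bound of order $\gamma^{-2-d/(4(k+1))}n^{-1}$, strictly weaker than the claimed $C/(\gamma^2 n)$ with $C$ independent of $\gamma$ --- and $\gamma$-uniformity of $C$ is precisely what is used in Section \ref{sec:discuss}, where $\gamma=\gamma_n\to 0$. What you dismiss as a ``secondary technical point'' is therefore the crux, and your proposal does not resolve it (a localization or peeling argument, or a radius-free entropy bound, would be needed).

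The paper's proof avoids introducing a $\gamma$-dependent radius altogether and uses a different mechanism for the entropy step, which is exactly where the hypotheses you never touch come in. It keeps the index set $\DD(E)$, parametrizes each $\mu\in\DD(E)$ by the optimal (Brenier) map $T$ pushing the Lebesgue measure onto $\mu$, so that $W_2(\mu,\mu')\le\Vert T-T'\Vert_{\mathbb{L}_2(\Omega)}$, and then invokes the Caffarelli/De Philippis--Figalli regularity theory --- this is where the uniform convexity of $\Omega$, the $C^1$ boundary, the lower bound $f\ge\alpha$ and the Sobolev embedding $H^k(\Omega)\subset C^{m,\beta}(\bar{\Omega})$ with $m+\beta=k-d/2$ are used --- to place these maps in a H\"older class $C^{m+1,\beta}(\bar{\Omega})$, whose bracketing entropy in $\mathbb{L}_2(\Omega)$ is of order $\epsilon^{-V}$ with $V=d/(k-d/2+1)$; the requirement $V<2$ is the actual source of the threshold $k>d-1$, and the conclusion then follows as in Theorem \ref{th:rate1D}. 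Incidentally, your worry about ``landing exactly on $k>d-1$'' is moot for your route: the exponent $d/(k+1)$ is below $2$ as soon as $k>d/2-1$, so the hypothesis of the theorem is more than enough there; the problem with your argument is not the exponent but the unhandled growth of the constant through $R(\gamma)$.
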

\section{Discussion} \label{sec:discuss}
In order to sum up the different results, let us consider the setting of Subsection \ref{subsec:discussion}. More precisely,  assume that $\P \in W_2(\PP_2(\Omega))$ is a distribution which gives  mass one to the set of a.c.\ measures (with respect to the Lebesgue measure $dx$).  Hence, thanks to Remark \ref{rmq:uniqueness},  there exists a unique  population Wasserstein barycenter  $\mu_{\P}^0$ which is an a.c.\ measure. We also assume that $\Omega$ is a compact and uniformly convex set with a $C^1$ boundary. 

Now, let $\bnu_1,\ldots,\bnu_n$ be iid random measures sampled from $\P$. For each $1 \leq i \leq n$, suppose that, conditionally on $\bnu_{i}$,   we are given a sequence $\bX_{i,1},\ldots,\bX_{i,p_{i}}$ of iid observations in $\R^{d}$ sampled from the measure $\bnu_{i}$.  Recall the notation $\P_n^{\nu}=\frac{1}{n}\sum_{i=1}^n\delta_{\bnu_i}$, $\bnu_{p_{i}} = \frac{1}{p_{i}} \sum_{j=1}^{p_{i}} \delta_{\bX_{i,j}}$ and $p = \min_{1 \le i \le n} p_{i}$, and consider the regularized Wasserstein barycenter $\hat{\bmu}_{n,p}^{\gamma}$ defined as
$$
\hat{\bmu}_{n,p}^{\gamma} = \underset{\mu\in\PP_{2}(\Omega)}{\text{argmin}}\  \frac{1}{n} \sum_{i=1}^{n} W_2^2\left(\mu,\bnu_{p_{i}} \right) +\gamma E(\mu), 
$$
with  the following penalizing functional defined for any arbitrarily small  $\alpha > 0$ 
\begin{equation}
\label{ex_penalty}
E(\mu) = \left\{\begin{array}{ll}
 \frac{1}{2}  \int_{\Omega} |f(x)|^2 dx+\Vert f\Vert_{H^{k}(\Omega)}^{2}, & \mbox{if}\ f =\frac{d\mu}{dx} \ \mbox{and} \ f\geq \alpha, \\
+\infty & \mbox{otherwise.}
\end{array}\right.
\end{equation}

For two a.c.\  measures $\mu = \mu_{f}$ and $\nu = \nu_{g}$ with density $f$ and $g$, it is easily seen that the symmetric and non-symmetric Bregman divergences related to $E$ satisfy
$$
d_E(\mu_{f},\nu_{g}) \geq    \| f - g\|^2_{\mathbb{L}^2(\Omega)}  \quad \mbox{and} \quad D_E(\mu_{f},\nu_{g}) \geq  \frac{1}{2}  \| f - g\|^2_{\mathbb{L}^2(\Omega)}.
$$
Let us now discuss the convergence of the measure $\hat{\bmu}_{n,p}^{\gamma}$ towards $\mu_{\P}^0$ with respect to the squared $\mathbb{L}^2(\Omega)$ distance between their respective densities $\hat{\bfun}_{n,p}^{\gamma}$ and $f_{\P}^0$, when both $n$ and $p$ tend to infinity and $\gamma$ tends to $0$. To this end, it is necessary to assume that $f_{\P}^0 \geq \alpha$, and we consider the decomposition
$$
\E\ \|\hat{\bfun}_{n,p}^{\gamma} -  f_{\P}^0\|^{2}_{\mathbb{L}^2(\Omega)} \leq 3\sqrt{ \underbrace{ \E\ d_{E}^{2} \left(\hat{\bmu}_{n,p}^{\gamma},\mu_{\P_n^{\nu}}^{\gamma}\right)}_{\mbox{Stability term}} }+3 \sqrt{ \underbrace{ \E\ d_{E}^{2} \left(\mu_{\P_n^{\nu}}^{\gamma},\mu_{\P}^{\gamma}\right)}_{\mbox{Variance term}}}+6  \underbrace{D_{E}\left( \mu_{\P}^{\gamma},\mu_{\P}^{0}\right)}_{\mbox{Bias term}}.
$$
Then, we gather the results on the stability Theorem \ref{stability} combined with Theorem 1 in \cite{fournier:hal-00915365},  Theorem \ref{rate_convergence} (convergence of the variance term) and Theorem \ref{th:biais} (convergence of the bias term)  to prove the convergence to zero of the three terms in the right-hand side of the above inequality.
\paragraph{Stability term} Recall that by inequality \eqref{eq:discrete2} one has that
$$
\E \left(  d^{2}_E \left(\hat{\bmu}_{n,p}^{\gamma},\mu_{\P_n^{\nu}}^{\gamma}\right)\right) \le \frac{16\diam(\Omega)}{\gamma^{2} n}   \sum_{i=1}^n \E \left( W_2^2(\bnu_i,\bnu_{p_{i}}) \right).
$$
For each $1 \leq i \leq n$ and conditionally on $\bnu_i$, the convergence to zero of $\E \left( W_2^2(\bnu_i,\bnu_{p_{i}}) \right)$ as $p_{i}$ tends to infinity can be controlled using the results in \cite{fournier:hal-00915365}. For instance, if the measure $\bnu_i$ has a moment of order $q > 4$ then, by  Theorem 1  in \cite{fournier:hal-00915365}, it follows that there exists a constant $C_{q,d} > 0$ (depending only on $q$ and $d$) such that
$$
 \E \left( W_2^2(\bnu_i,\bnu_{p_{i}}) \right) \leq C_{q,d} \E\left( M_q^{2/q}(\bnu_i) \right)  p_{i}^{-1/2}
$$
provided that $d<4$, and where $M_q(\bnu_i)=\int_{\Omega}\vert x\vert^q d\bnu_i(x)$. Hence, under such assumptions on $q$ and $d$, it follows that
\begin{equation}
\E \left(  d^{2}_E \left(\hat{\bmu}_{n,p}^{\gamma},\mu_{\P_n^{\nu}}^{\gamma}\right)\right) \le 4 C_{q,d}    \E\left( M_q^{2/q}({\bnu_1}) \right) \frac{1}{\gamma^{2} p^{1/2}}. \label{eq:convstability}
\end{equation}
Finer results on the convergence rate of $\E \left( W_2^2(\bnu_i,\bnu_{p_{i}}) \right)$ in dimension $2$ can also be found in \cite{ambrosio2018pde} and \cite{ambrosio2018finer}.

\paragraph{Variance term} By Theorem \ref{rate_convergence} , one obtains that
$
 \E\left(d_{E}^{2} \left(\mu_{\P_n^{\nu}}^{\gamma},\mu_{\P}^{\gamma}\right)\right) \le \frac{C}{\gamma^2n}. $

\paragraph{Bias term} By Theorem \ref{th:biais}, $\lim\limits_{\gamma \rightarrow 0} D_E(\mu_{\P}^{\gamma},\mu_{\P}^0)= 0$.

Let us finally assume that the distribution $\P$ is such that  $\E\left( M_q^{2/q}(\bnu_1) \right) < + \infty$. Therefore,  under the various assumptions made in this discussion, and by combining the above results, the expected squared $\mathbb{L}^2(\Omega)$ error $\E \left(   \|\hat{\bfun}_{n,p}^{\gamma} -  f_{\P}^0\|^{2}_{\mathbb{L}^2(\Omega)}  \right)$ converges to zero provided that  $\gamma=\gamma_{n,p}$ is a sequence of regularizing parameters converging to zero such that
$$
\lim_{\min(n,p) \to \infty} \gamma_{n,p}^2n = + \infty \qquad \mbox{and} \qquad \lim_{\min(n,p) \to \infty} \gamma_{n,p}^2 p ^{1/2} = + \infty.
$$

\appendix


\section{Proofs of the paper}\label{App_proof}

\subsection{Proof of the subgradient's inequality}\label{App_subgrad}
The proof of  Theorem \ref{thm_subgradient} is based on the two succeeding lemmas.
\begin{lemma}
\label{lemma_1}
The two following assertions are equivalent:
\begin{enumerate}
\item $\mu\in  \PP_{2}(\Omega)$ minimizes $J$ over $ \PP_{2}(\Omega)$,
\item there exists a subgradient $\phi\in\partial J(\mu)$ such that $\langle \phi,\eta-\mu \rangle\ge 0$ for all $\eta\in  \PP_{2}(\Omega)$.
\end{enumerate}
\end{lemma}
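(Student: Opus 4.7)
The plan is to prove both implications directly from the definition \eqref{def_subgradient} of the subgradient; the content of the lemma is a convex-analytic first-order characterization of minimizers of $J$ on the convex set $\PP_{2}(\Omega)$.

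For the direction (2) $\Rightarrow$ (1), I would simply combine the defining subgradient inequality $J(\eta) \ge J(\mu) + \langle \phi,\eta-\mu \rangle$, valid for every $\eta\in\PP_{2}(\Omega)$, with the standing assumption $\langle \phi,\eta-\mu\rangle \ge 0$, to obtain $J(\eta)\ge J(\mu)$ for every $\eta\in\PP_{2}(\Omega)$.

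For the converse direction (1) $\Rightarrow$ (2), the plan is to exhibit an explicit subgradient satisfying the required variational inequality, namely the zero function $\phi\equiv 0$. Because the defining inequality of the subgradient is required only over $\PP_{2}(\Omega)$, the minimality of $\mu$ reads
\[ J(\eta) \ge J(\mu) = J(\mu)+\langle 0,\eta-\mu\rangle \quad \text{for every } \eta\in\PP_{2}(\Omega), \]
so that $0\in\partial J(\mu)$, and the variational inequality $\langle 0,\eta-\mu\rangle = 0\ge 0$ is then trivial.

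There is essentially no obstacle here: both implications follow immediately from the definition of the subdifferential adopted in the paper. The genuine utility of the lemma will appear in the subsequent steps of the proof of Theorem~\ref{thm_subgradient}, where the subdifferential of $J = W_{2}^{2}(\cdot,\nu)+\gamma E$ at the minimizer $\mu$ is to be decomposed as the sum of a Kantorovich potential $\phi^{\mu,\nu}\in\partial W_{2}^{2}(\cdot,\nu)(\mu)$ and the smooth contribution $\gamma\nabla E(\mu)$; the variational inequality granted by assertion (2) will then be specialized to this structured subgradient in order to derive \eqref{ineg_subgradient}.
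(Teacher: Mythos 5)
Your proof of (2) $\Rightarrow$ (1) is exactly the paper's argument. For (1) $\Rightarrow$ (2), however, you take a genuinely different and far more elementary route: you observe that, since the defining inequality \eqref{def_subgradient} is only imposed for $\nu\in\PP_2(\Omega)$, minimality of $\mu$ over $\PP_2(\Omega)$ says precisely that the zero functional belongs to $\partial J(\mu)$, and the required variational inequality is then trivial. Read against the paper's definition of the subgradient taken literally (membership in $\partial J(\mu)$ being equivalent to \eqref{def_subgradient} over $\PP_2(\Omega)$), this is a complete proof of the statement as written. The paper instead treats $\partial J(\mu)$ as the subdifferential of convex analysis in Clarke's sense: it shows that at a minimizer the directional derivatives $J'(\mu;\eta-\mu)$ are nonnegative for all $\eta\in\PP_2(\Omega)$, represents $J'(\mu;\cdot)$ as the support function of $\partial J(\mu)$, and then combines compactness of a total-variation ball intersected with $\PP_2(\Omega)$ and Ky Fan's minimax theorem to extract a \emph{single} subgradient $\phi$ with $\langle\phi,\eta-\mu\rangle\ge 0$ for all $\eta$. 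That heavier machinery is what one needs when a constrained minimizer need not admit $0$ as a subgradient, i.e.\ when the subgradient inequality (or the bound by the directional derivative) is required in directions that may leave $\PP_2(\Omega)$; under the definition actually stated in the paper, your shortcut is legitimate, and it even survives the downstream use in Theorem \ref{thm_subgradient}: applying the sum rule $\partial J(\mu)=\partial_1 W_2^2(\mu,\nu)+\gamma\nabla E(\mu)$ to $\phi=0$ identifies $-\gamma\nabla E(\mu)$ with a Kantorovich potential $\phi^{\mu,\nu}$ and yields \eqref{ineg_subgradient} with equality. The price is that all the nontrivial content is then carried by the sum rule and Lemma \ref{lemma_2}, and your argument would no longer suffice if $\partial J(\mu)$ were understood in the ambient-space sense that the authors' own proof implicitly adopts; it would therefore be worth stating explicitly which notion of $\partial J(\mu)$ you are using.
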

\begin{proof}[Proof of Lemma \ref{lemma_1}]
\textit{2}$\Rightarrow$\textit{1}. Let $\phi\in\partial J(\mu)$ such that $\langle \phi,\eta-\mu \rangle\ge 0$ for all $\eta\in  \PP_{2}(\Omega)$. By definition of the subgradient, $\forall \ \eta\in  \PP_{2}(\Omega)$, we have $J(\eta)\ge J(\mu) + \langle \phi, \eta-\mu \rangle$ which is greater than $J(\mu)$ by assertion. Hence $\mu$ minimizes $J$.\\
\textit{1}$\Rightarrow$\textit{2}. Take $\mu\in$ int(dom $J$) (that is $J(\mu)<+\infty$) such that $\mu$ is a minimum of $J$ over $ \PP_{2}(\Omega)$. Then the directional derivative of $J$ at the point $\mu$ along $(\eta-\mu)$ exists (Proposition 2.22 in \cite{Clarke}) and satisfies
\begin{equation}
\label{direct_derivative}
J'(\mu;\eta-\mu):=\underset{\underset{t>0}{t\to 0}}{\lim} \frac{J(\mu+t(\eta-\mu))-J(\mu)}{t}\ge 0.
\end{equation}
Remark that $ \PP_{2}(\Omega)$ is a convex set. By Proposition 4.3 of \cite{Clarke}, since $J$ is a proper convex function and $\mu\in$ dom($J$), we obtain the equivalence
\[\phi \in\partial J(\mu) \ \Leftrightarrow \ \langle \phi, \Delta\rangle\le J'(\mu;\Delta) \ \text{for all} \ \Delta\in  \PP_{2}(\Omega).\]
Moreover, since $J$ is proper convex and lower semi-continuous, so is $J'(f;\cdot)$. Given that $ \PP_{2}(\Omega)$ is a Hausdorff convex space, we get by Theorem 7.6 of \cite{aliprantis2006infinite}, that for all $(\eta-\mu)\in \PP_{2}(\Omega)$, $J'(\mu;\eta-\mu)=\sup\{\langle \phi, \eta-\mu \rangle$ where  $\phi$ is such that $\langle \phi, \Delta \rangle \le J'(\mu;\Delta), \forall \Delta\ \text{in} \  \PP_{2}(\Omega)\}.$
Hence by \eqref{direct_derivative} we get $\underset{\phi\in\partial J(\mu)}{\sup}\langle\phi,\eta-\mu\rangle\ge 0$. We then define the ball $B_\epsilon=\{\eta+\mu\in \MM(\Omega) \ \text{such that}\ \Vert \eta\Vert_{TV} \le \epsilon\}$, where $\Vert \cdot\Vert_{TV}$ is the norm of total variation. We still have
\[\underset{\eta\in B_\epsilon\cap \PP_{2}(\Omega)}{\inf} \ \underset{\phi\in\partial J(\mu)}{\sup}\langle \phi,\eta-\mu\rangle \ge 0.\]
Note that $\partial J(\mu)$ in a convex set. Moreover $B_\epsilon\cap  \PP_{2}(\Omega)$ is compact, and $(\phi,\eta)\mapsto\langle \phi,\eta-\mu\rangle$ is bilinear. Thus we can switch the infimum and the supremum by the Ky Fan's theorem (4.36 in \cite{Clarke}). In that way, there exists $\phi\in\partial J(f)$ such that $\underset{\eta\in B_\epsilon\cap \PP_{2}(\Omega)}{\inf} \langle \phi,\eta-\mu\rangle \ge 0$. By convexity of $ \PP_{2}(\Omega)$, any $\zeta\in  \PP_{2}(\Omega)$ can be written as $t(\eta-\mu)+\mu$ for some $t\ge 0$ and $\eta\in B_\epsilon\cap  \PP_{2}(\Omega)$. This concludes the proof of the lemma.
\end{proof}

We also need the following lemma which follows from Proposition 7.17 in \cite{santambrogio2015optimal}.
\begin{lemma}
\label{lemma_2}
Let $\mu\in \PP_{2}(\Omega)$ and $\phi\in\mathbb{L}_1(\mu)$, then
\[\phi\in\partial_1 W_2^2(\mu,\nu) \Leftrightarrow \exists \ \psi\in\mathbb{L}_1(\nu) \ \mbox{such that} \ \phi(x)+\psi(y)\le\vert x-y\vert^2\] and  $W_2^2(\mu,\nu)=\int \phi d\mu + \int\psi d\nu$ 
where $\partial_1W_2^2(\mu,\nu)$ denote the subdifferential of the function $ W_2^2(\cdot,\nu)$ at $\mu$.
\end{lemma}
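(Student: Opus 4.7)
The plan is to prove the two implications separately, leveraging Kantorovich duality (Theorem \ref{Kantorovich}) in both directions.

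The backward direction ($\Leftarrow$) is the shorter one. Assume $\psi\in\mathbb{L}_1(\nu)$ satisfies the admissibility constraint $\phi(x)+\psi(y)\le |x-y|^2$ together with the equality $\int\phi\,d\mu+\int\psi\,d\nu=W_2^2(\mu,\nu)$. For any competitor $\eta\in\PP_2(\Omega)$, the pair $(\phi,\psi)$ remains in $C_W$ for the Kantorovich dual associated to $(\eta,\nu)$, so by Theorem \ref{Kantorovich} one gets $W_2^2(\eta,\nu)\ge\int\phi\,d\eta+\int\psi\,d\nu$. Subtracting the equality at $\mu$ yields
$$W_2^2(\eta,\nu)-W_2^2(\mu,\nu)\ge\int\phi\,d(\eta-\mu)=\langle\phi,\eta-\mu\rangle,$$
which is exactly $\phi\in\partial_1W_2^2(\mu,\nu)$.

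For the forward direction ($\Rightarrow$), I would construct $\psi$ explicitly as the $c$-transform of $\phi$:
$$\psi(y):=\inf_{x\in\Omega}\bigl(|x-y|^2-\phi(x)\bigr),$$
which automatically satisfies $\phi(x)+\psi(y)\le|x-y|^2$ for every $(x,y)\in\Omega\times\Omega$. The hypothesis $\phi\in\partial_1W_2^2(\mu,\nu)$ rewrites as the statement that $\mu$ minimizes $\eta\mapsto W_2^2(\eta,\nu)-\int\phi\,d\eta$ over $\PP_2(\Omega)$, with minimum value $W_2^2(\mu,\nu)-\int\phi\,d\mu$. Expanding $W_2^2(\eta,\nu)$ via its primal definition and exchanging the two infima (over $\eta$ and over couplings $\pi$ with marginals $(\eta,\nu)$), one obtains
$$\inf_{\eta\in\PP_2(\Omega)}\Bigl[W_2^2(\eta,\nu)-\int\phi\,d\eta\Bigr]=\inf_{\pi:\,(\pi)_2=\nu}\int\bigl(|x-y|^2-\phi(x)\bigr)\,d\pi(x,y).$$
Disintegrating $\pi$ with respect to its second marginal $\nu$, the right-hand side collapses into a pointwise minimization in $x$ for each $y$, producing exactly $\int\psi\,d\nu$. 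Comparing with the previous line gives $W_2^2(\mu,\nu)=\int\phi\,d\mu+\int\psi\,d\nu$, so that $(\phi,\psi)$ is an optimal pair, as required.

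The main obstacle is the measurability and integrability of $\psi$. Since $\phi$ is only assumed to lie in $\mathbb{L}_1(\mu)$, the pointwise infimum defining $\psi$ may fail to be Borel-measurable if taken literally; I would therefore replace $\phi$ by its $c$-concave envelope (equivalently, perform a double $c$-transform), which yields a measurable representative giving the same value on the support of $\mu$ and the same dual functional. The compactness of $\Omega$ (the ambient assumption of the paper) then ensures that $\psi$ is upper semicontinuous and bounded above, and the two finiteness properties $W_2^2(\mu,\nu)<+\infty$ and $\int\phi\,d\mu<+\infty$ propagate through the identity just obtained to force $\psi\in\mathbb{L}_1(\nu)$. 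A secondary technical point is the justification of the exchange of the two infima, which reduces to a direct Fubini-type argument once couplings are viewed as joint measures with prescribed second marginal $\nu$.
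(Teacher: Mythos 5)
Your proposal is correct in substance. The backward implication coincides with the paper's argument (the pair $(\phi,\psi)$ stays admissible in the Kantorovich dual for $(\eta,\nu)$, so the subgradient inequality drops out), but your forward implication takes a genuinely different route. The paper works through Legendre--Fenchel conjugation on a function space: it sets $H(\phi)=-\int\psi^{\phi}d\nu$ with $\psi^{\phi}$ the $c$-transform, checks that $H$ is proper, convex and lower semicontinuous so that $H^{**}=H$, identifies $H^{*}=W_2^2(\cdot,\nu)$ via Kantorovich duality, and then concludes from $F^{*}(\phi)=H(\phi)$ together with the Fenchel equality $F^{*}(\phi)+F(\mu)=\int\phi\,d\mu$ that characterizes subgradients. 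You instead compute $\inf_{\eta}\bigl[W_2^2(\eta,\nu)-\int\phi\,d\eta\bigr]$ directly from the primal formulation: merging the two infima into a single infimum over couplings with second marginal $\nu$ and disintegrating with respect to $\nu$ collapses the problem to the pointwise $c$-transform, giving $W_2^2(\mu,\nu)-\int\phi\,d\mu=\int\psi\,d\nu$. This is more elementary---it avoids the biconjugation theorem and the functional-analytic setup---but it shifts the technical burden onto the interchange step: the inequality $\le$ in $\inf_{\pi}\int(|x-y|^2-\phi(x))\,d\pi=\int\psi\,d\nu$ requires a measurable (or universally measurable) $\epsilon$-optimal selection $y\mapsto x_\epsilon(y)$, so your ``Fubini-type'' remark should be expanded to an actual selection or countable-covering argument. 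Two smaller remarks: the measurability worry about $\psi$ itself is vacuous, since $\psi$ is an infimum over $x$ of functions continuous in $y$ and is therefore upper semicontinuous, hence Borel; consequently the proposed replacement of $\phi$ by its $c$-concave envelope is unnecessary---which is fortunate, because the claim that $\phi^{cc}$ agrees with $\phi$ on the support of $\mu$ is not justified a priori (it is only a consequence of the lemma, and using it would be circular). Also, the lemma is stated without compactness of $\Omega$ (that hypothesis only enters in Sections 4--5); integrability of $\psi$ follows instead from the quadratic upper bound $\psi(y)\le|x_0-y|^2-\phi(x_0)$ valid for any $x_0$ with $\phi(x_0)$ finite, the fact that $\nu\in\PP_2(\Omega)$, and the identity you derive, which forces the negative part of $\psi$ to be $\nu$-integrable.
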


From these lemmas, we directly get the proof of Theorem \ref{thm_subgradient}.
\begin{proof}[Proof of Theorem \ref{thm_subgradient}]
Let $\mu\in \PP_{2}(\Omega)$ be a minimizer of $J$. From Lemma \ref{lemma_1}, we know that there exists $\phi$ a subgradient of $J$ in $\mu$ such that $\langle \phi,\eta-\mu\rangle\ge 0$ for all $\eta\in  \PP_{2}(\Omega)$. Since $\zeta\mapsto E(\zeta)$ is convex differentiable, $\zeta\mapsto W_2^2(\zeta,\nu)$ is a continuous convex function and $\mu$ minimizes $J$, we have by the subdifferential of the sum (Theorem 4.10 in \cite{Clarke}) that $\partial J(\mu)=\partial_1 W_2^2(\mu,\nu)+\gamma \ \nabla E(\mu)$. This implies that all $\phi\in\partial J(\mu)$ is written $\phi=\phi_1+\phi_2$ with $\phi_1=\phi^{\mu,\nu}$ optimal for the couple $(\mu,\nu)$ (by Lemma \ref{lemma_2}) and $\phi_2=\gamma\nabla E(\mu)$. Finally, we have that $\langle \phi^{\mu,\nu}+\gamma\nabla E(\mu),\eta-\mu\rangle\ge 0$ for all $\eta\in \PP_{2}(\Omega)$ that is
$\gamma \ \langle\nabla E(\mu),\mu-\eta\rangle\le - \int \phi^{\mu,\nu} d(\mu-\eta), \\ \forall \eta\in  \PP_{2}(\Omega).$
\end{proof}

\subsection{Proof of existence, uniqueness and stability of penalized barycenters}\label{App_exist}
For the sake of completeness, we introduce the  functional space $Y:= \{g \in\mathcal{C}(\Omega): x \mapsto g(x)/(1+\vert x\vert ^2)\ \text{is bounded}\}$ 
endowed with the norm \linebreak $\Vert g \Vert_Y=\sup_{x\in\Omega} \vert g(x)\vert/(1+\vert x\vert^2)$ 
where $\mathcal{C}(\Omega)$ is the space of continuous functions from $\Omega$ to $\R$. We finally denote as $Z$ the closed subspace of $Y$ given by
$Z= \left\{g \in\mathcal{C}(\Omega) : \lim_{\vert x\vert\rightarrow\infty} g(x)/(1+\vert x\vert ^2)=0\right\}.$ 
The space $\MM(\Omega)$ of bounded Radon measures is identified with the dual of $\mathcal{C}_0(\Omega)$ (space of continuous functions that vanish at infinity). Finally, we denote by
$
\mathbb{L}_1(\mu)
$
the set of integrable functions $g : \Omega \to \R$ with respect to the measure $\mu$.

\begin{proof}[Proof of Theorem \ref{exist_discrete}]
Let $(\mu^k)_k\subset\PP_2(\Omega)$ a minimizing sequence of probability measures of $J_{\P_n}^{\gamma}$. Hence, there exists a constant $M\ge 0$ such that $\forall k,\ J_{\P_n}^{\gamma}(\mu^k)\le M$. It follows that  for all $k$, $\frac{1}{n}\sum_{i=1}^nW_2^2(\mu^k,\nu_i)\le M$. By Lemma 2.1 of \cite{agueh2011barycenters} we thus have
\begin{align*}
\frac{1}{n}\sum_{i=1}^nW_2^2(\nu^i,\mu^k) &= 2 \sum_{i=1}^n\underset{f\in Z}{\sup}\ \left\{\int_{\Omega}fd\mu^k+\int_{\Omega}Sf(x)d\nu^i(x)\right\} \le M,
\end{align*}
where $S f(x)=\underset{y\in\Omega}{\inf}\{ \frac{1}{2n}\vert x-y\vert ^2-f(y)\}$. Since the function $x\mapsto\vert x\vert^{\alpha}$ (with $1<\alpha<2$) belongs to $Z$, we have that $\int_{\R^d}\vert x\vert ^{\alpha} d\mu^k(x)$ is bounded by a constant $L\ge 0$ for all $k$. We deduce that $(\mu^k)_k$ is tight (for instance, take the compact $K^c=\{x\in\Omega \ \text{such that} \ \vert x\vert ^{\alpha} > \frac{L}{\epsilon}\}$). Since $(\mu^k)_k$ is tight, by Prokhorov's theorem, there exists a subsequence of $(\mu^k)_k$ (still denoted $(\mu^k)_k$) which weakly converges  to a probability measure $\mu$. Moreover, one can prove that $\mu\in \PP_2(\Omega)$. Indeed for all lower semicontinuous functions bounded from below by $f$, we have that $\underset{k\rightarrow \infty}{\liminf}\int_{\Omega}f(x)d\mu^k(x)\ge\int_{\Omega}f(x)d\mu(x)$ by weak convergence. Hence for $f:x\mapsto\vert x\vert ^2$, we get $\int_{\Omega}\vert x\vert ^2d\mu(x)\le\underset{k\rightarrow \infty}{\liminf}\int_{\Omega}\vert x\vert ^2d\mu^k(x) < + \infty$, and thus $\mu\in\PP_2(\Omega)$.

Let $(\pi_i^k)_{1\le i\le n, 1\le k}$ be a sequence of optimal transport plans where $\pi_i^k$ is an optimal transport plan between $\mu^k$ and $\nu_i$. Since $\sup_k W_2^2(\mu^k,\nu_i)=\sup_k \iint_{\Omega\times\Omega}\vert x-y\vert ^2 d\pi_i^k(x,y) < + \infty$, we may apply Proposition 7.1.3 of \cite{ambrosio2008gradient}: $(\pi_i^k)_k$ is weakly relatively compact on the probability space over $\Omega\times\Omega$ and every weak limit $\pi_i$ is an optimal transport plan between  $\mu$ and $\nu_i$ with, for all $1\le i \le n$,
$
W_2^2(\mu,\nu_i)\le \underset{k\rightarrow\infty}{\liminf}\int_{\Omega\times\Omega}\vert x-y\vert^2d\pi_i^k(x,y)< +\infty.
$
Since $E$ is lower semicontinuous, we get that
\begin{align*}
\underset{k\rightarrow\infty}{\liminf} \ J_{\P_n}^{\gamma}(\mu^k) &= \underset{k\rightarrow\infty}{\liminf} \ \frac{1}{n}\sum_{i=1}^nW_2^2(\mu^k,\nu_i)+\gamma E(\mu^k)\\
& \ge \frac{1}{n}\sum_{i=1}^nW_2^2(\mu,\nu_i)+\gamma E(\mu) =J_{\P_n}^{\gamma}(\mu).
\end{align*}
Hence $J_{\P_n}^{\gamma}$ admits at least  $\mu\in\PP_2(\Omega)$ as a minimizer. Finally, by the strict convexity of $J_{\P_n}^{\gamma}$ on its domain, the minimizer is unique and it belongs to $\DD(E)$ as defined in \eqref{eq:domE}, which completes the proof.
\end{proof}

\begin{proof}[Proof of Theorem \ref{exist_continuous}]
First, let us prove the existence of a minimizer. For that purpose, we decide to follow the sketch of the proof of the existence of a Wasserstein barycenter given by Theorem 1 in \cite{gouic2015existence}. We suppose that $(\P_n)_{n\ge 0}\subseteq W_2(\PP_2(\Omega))$ is a sequence of measures, such that  $\mu^n\in \PP_2(\Omega)$ is a probability measure minimizing $J_{\P_n}^{\gamma}$, for all $n$. Furthermore, we suppose that there exists $\P\in W_2(\PP_2(\Omega))$ such that $\mathcal{W}_2(\P,\P_n)\underset{n\rightarrow +\infty}{\longrightarrow}0$. We then have to prove that $(\mu^n)_{n\ge 1}$ is precompact and that all limits minimize $J_{\P}^{\gamma}$. We denote
$\tilde{\bmu}$ a random measure with distribution $\P$  and 
$ \tilde{\bmu}^n$ a random measure with distribution $\P_n$.

\noindent
Hence we get
\begin{align*}
W_2(\mu^n,\delta_x) & =\mathcal{W}_2(\delta_{\mu^n},\delta_{\delta_x})\le \mathcal{W}_2(\delta_{\mu^n},\P_n)+\mathcal{W}_2(\P_n,\delta_{\delta_x})\\
& = \E(W_2^2(\mu^n,\tilde{\bmu}^n))^{1/2}+\E(W_2^2(\tilde{\bmu}^n,\delta_x))^{1/2}.
\end{align*}
Moreover, $\E(W_2^2(\mu^n,\tilde{\bmu}^n))^{1/2}\le M$ for a constant $M\ge 0$ since $\mu_n$ minimizes $J_{\P_n}^{\gamma}$ and $\tilde{\bmu}^n$ is of law $\P_n$. Then for $x\in\Omega$
\begin{align*}
W_2(\mu^n,\delta_x) & \le M + \mathcal{W}_2(\P_n,\delta_{\delta_x}) \le M + \mathcal{W}_2(\P_n,\P) + \mathcal{W}_2(\P,\delta_{\delta_x})\le L
\end{align*}
since $\mathcal{W}_2(\P_n,\P)\underset{n\rightarrow +\infty}{\longrightarrow} 0$ and $\P\in W_2(\PP_2(\Omega))$ by hypothesis. By Markov inequality, we have for $r> 0$
$$
\mu^n(B(x,r)^c)= \P_{\mu^n}(\vert X-x\vert^2\ge r^2) \le  \frac{\E_{\mu^n}(\vert X-x\vert^2)}{r^2}=\frac{W_2^2(\mu^n,\delta_x)}{r^2}
$$
and $\mu^n(B(x,r)^c)\le\frac{L^2}{r^2}$. Hence $(\mu^n)_n$ is tight: it is possible to extract a subsequence (still denoted $(\mu^n)$) which converges weakly to a measure $\mu$ by Prokhorov's theorem. Let us show that $\mu$ minimizes $J_{\P}^{\gamma}$. Let $\eta\in \PP_2(\Omega)$ and $\nu\in\PP_2(\Omega)$ with distribution $\P$.
\begin{align}
J_{\P}^{\gamma}(\eta) & = \E_{\P}(W_2^2(\eta,\nu))+\gamma E(\eta) \nonumber  \\
&= \mathcal{W}_2^2(\delta_{\eta},\P)+\gamma E(\eta) \nonumber  \\
&= \underset{n\rightarrow +\infty}{\lim} \mathcal{W}_2^2(\delta_{\eta},\P_n)+\gamma E(\eta)\ \qquad &\mbox{since by hypothesis}\ \mathcal{W}_2(\P_n,\P)\rightarrow 0   \nonumber  \\
& \ge \underset{n\rightarrow +\infty}{\liminf} \ \mathcal{W}_2^2(\delta_{\mu^n},\P_n)+\gamma E(\mu^n) \qquad &\mbox{since}\ \mu^n \ \mbox{minimizes} \ J_{\P_n}^{\gamma} \label{ineq:exists}
\end{align}
Moreover, we have by the inverse triangle inequality that
$$\underset{n\rightarrow +\infty}{\liminf} \ \mathcal{W}_2(\delta_{\mu^n},\P_n) \geq \underset{n\rightarrow +\infty}{\liminf} \ (\mathcal{W}_2(\delta_{\mu^n},\P) - \mathcal{W}_2(\P,\P_n)).$$
First, $\mathcal{W}_2(\P,\P_n)\rightarrow 0$ by assumption. Second, we have that
\begin{align*}
\underset{n\rightarrow +\infty}{\liminf}\ \mathcal{W}_2(\delta_{\mu^n},\P) &\geq \int \underset{n\rightarrow +\infty}{\liminf}\ W_2^2(\mu_n,\nu)d\P(\nu) &\qquad \mbox{by Fatou's Lemma}&\\
& \geq  \int W_2^2(\mu,\nu)d\P(\nu)=\mathcal{W}_2^2(\delta_{\mu},\P)\qquad&  &
\end{align*}
Thus from \eqref{ineq:exists} and by lower semicontinuity of $E$, we conclude that
$J_{\P}^{\gamma}(\eta)\ge \linebreak \mathcal{W}_2^2(\delta_{\mu},\P) +\gamma E(\mu)=J_{\P}^{\gamma}(\mu).$
Hence $\mu$ minimizes $J_{\P}^{\gamma}$. To finish the proof of the existence of a minimizer, we need the following result whose proof can be found in \cite{gouic2015existence}.

\begin{thm}
\label{thm_ex}
For all $\P\in W_2(\PP_2(\Omega))$, there is a sequence of finitely supported distributions $\P_n$ (that is $\P_n=\sum_{k=1}^K\lambda_k\delta_{\kappa_k}$ where $\sum_{k=1}^K\lambda_k=1$) such that \linebreak $\mathcal{W}_2^2(\P_n,\P)\underset{n\rightarrow +\infty}{\longrightarrow} 0$.
\end{thm}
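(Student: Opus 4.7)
The plan is to construct approximating finitely supported measures $\P_n$ via a direct discretization scheme, exploiting tightness of $\P$ on the Polish space $(\PP_2(\Omega),W_2)$.

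First I would fix $\epsilon>0$ and a reference measure $\mu_0\in\PP_2(\Omega)$ (for instance $\mu_0=\delta_{x_0}$ for some $x_0\in\Omega$). Since $\P\in W_2(\PP_2(\Omega))$, one has $\int W_2^2(\mu_0,\nu)\,d\P(\nu)<\infty$. Because $\P$ is a Borel probability measure on the Polish space $(\PP_2(\Omega),W_2)$, it is tight, so there exists a compact set $K_\epsilon\subset\PP_2(\Omega)$ with $\P(K_\epsilon^c)<\epsilon$. By a standard exhaustion argument (replacing $K_\epsilon$ by a larger compact set if necessary, and using the integrability of $W_2^2(\mu_0,\cdot)$ against $\P$ together with the monotone/dominated convergence theorem), one may further arrange that $\int_{K_\epsilon^c}W_2^2(\mu_0,\nu)\,d\P(\nu)<\epsilon$.

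Next I would exploit the compactness of $K_\epsilon$: for any $\delta>0$ one can cover $K_\epsilon$ by finitely many open $W_2$-balls $B(\mu_k,\delta)$ centered at $\mu_k\in K_\epsilon$, $k=1,\ldots,N(\delta)$, and then induce a Borel partition $A_1,\ldots,A_{N(\delta)}$ of $K_\epsilon$ with $A_k\subset B(\mu_k,\delta)$. Setting $\lambda_0=\P(K_\epsilon^c)$ and $\lambda_k=\P(A_k)$ for $k\geq 1$, I would define the finitely supported measure
$$
\P_n \;=\; \lambda_0\,\delta_{\mu_0}\;+\;\sum_{k=1}^{N(\delta)}\lambda_k\,\delta_{\mu_k}\;\in\;W_2(\PP_2(\Omega)).
$$
To bound $\mathcal{W}_2^2(\P,\P_n)$, I would construct an explicit coupling $\Pi$ on $\PP_2(\Omega)\times\PP_2(\Omega)$ that maps every $\nu\in A_k$ to $\mu_k$ and every $\nu\in K_\epsilon^c$ to $\mu_0$; by construction its marginals are $\P$ and $\P_n$. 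The transport cost then satisfies
$$
\mathcal{W}_2^2(\P,\P_n)\;\le\;\sum_{k=1}^{N(\delta)}\int_{A_k}W_2^2(\nu,\mu_k)\,d\P(\nu)\;+\;\int_{K_\epsilon^c}W_2^2(\nu,\mu_0)\,d\P(\nu)\;\le\;\delta^2+\epsilon.
$$
Choosing $\delta^2\le\epsilon$ and letting $\epsilon=\epsilon_n\to 0$ yields a sequence of finitely supported distributions with $\mathcal{W}_2^2(\P_n,\P)\to 0$.

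The main obstacle is the treatment of the tail outside $K_\epsilon$: tightness alone only guarantees that $\P(K_\epsilon^c)$ is small, whereas we need the squared Wasserstein tail integral to be small. This is precisely where the hypothesis $\P\in W_2(\PP_2(\Omega))$---i.e., finiteness of the second moment of $W_2(\mu_0,\cdot)$ under $\P$---is indispensable, since combined with tightness and a convergence theorem it ensures that $K_\epsilon$ can be enlarged so that both the excess mass and the second-moment tail are simultaneously arbitrarily small. A minor technical point to check is that open $W_2$-balls are Borel in $\PP_2(\Omega)$, so that the partition sets $A_k$ are legitimately measurable, but this is immediate from the Polish structure of $(\PP_2(\Omega),W_2)$.
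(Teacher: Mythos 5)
Your proof is correct. Note that the paper does not actually prove Theorem \ref{thm_ex}: it imports it from \cite{gouic2015existence}, where it is an instance of the general fact that finitely supported measures are dense in the Wasserstein space over a Polish space (here applied with the base space $(\PP_2(\Omega),W_2)$ itself). Your discretization argument --- tightness plus integrability to handle the tail, a finite $\delta$-net of the compact core, disjointification into Borel cells, and the explicit coupling sending each cell to its center and the tail to $\mu_0$ --- is exactly the standard proof of that density result, and the cost estimate $\mathcal{W}_2^2(\P,\P_n)\le\delta^2+\epsilon$ is right. One small remark: invoking Ulam's theorem for tightness implicitly requires $(\PP_2(\Omega),W_2)$ to be Polish, hence $\Omega$ closed; this is harmless in the paper's setting (the convergence results assume $\Omega$ compact), but you can avoid the issue altogether, since separability of $\PP_2(\Omega)$ already suffices: take a dense sequence $(\mu_k)_k$, note that $\P\bigl(\bigcup_{k\le N}B(\mu_k,\delta)\bigr)\to 1$ as $N\to\infty$, send the remaining small-mass set to $\mu_0$, and control its contribution by absolute continuity of the integral of the $\P$-integrable function $\nu\mapsto W_2^2(\mu_0,\nu)$. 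An alternative route, also standard, is to take $\P_n$ to be empirical measures of an iid sample from $\P$ and combine Varadarajan's theorem with the law of large numbers for $\int W_2^2(\mu_0,\nu)\,d\P(\nu)$; your deterministic construction is, if anything, more elementary.
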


Now, by Theorem \ref{thm_ex} it follows that for a given distribution $\P$, one can find a sequence of finitely supported distributions $\P_n$ such that for all $n$ there exists a unique measure $\mu^n\in \PP_2(\Omega)$ minimizing $J_{\P_n}^{\gamma}$ using Theorem \ref{exist_discrete} and such that $W_2^2(\P_n,\P)\underset{n\rightarrow +\infty}{\longrightarrow} 0$ thanks to Theorem \ref{thm_ex}. Therefore there is a probability measure $\mu$ which minimizes $J_{\P}^{\gamma}$. Let us make sure that $\mu$ is indeed in the space $\PP_2(\Omega)$. From Theorem \ref{exist_discrete}, we also have that $\mu^n\in \PP_2(\Omega)$ for all $n$. Thus by weak convergence,
$
\int_{\Omega}\vert x \vert^2d\mu(x)\le\underset{n\rightarrow +\infty}{\liminf}\int_{\Omega}\vert x \vert^2d\mu^n(x) < +\infty.
$
Finally, the uniqueness of the minimum is obtained by the strict convexity of the functional $\mu\mapsto\E_{\P}(W_2^2(\mu,\nu))+\gamma E(\mu)$ on the domain $\DD(E)$, which completes the proof.
\end{proof}
\subsection{Proof of the stability Theorem \ref{stability}}\label{App_stab}
\begin{proof}[Proof of Theorem \ref{stability}]
We denote by $\mu,\zeta\in\PP_{2}(\Omega)$ the probability measures such that $\mu$ minimizes $J_{\P_n^{\nu}}^{\gamma}$ and $\zeta$ minimizes $J_{\P_n^{\eta}}^{\gamma}$. For each $1 \leq i \leq n$, one has that $\theta\mapsto\frac{1}{n}W_2^2(\theta,\nu_i)$ is a convex, proper and continuous function. Therefore, Theorem 4.10 in \cite{Clarke}, we have that $\partial J_{\P_n^{\nu}}(\mu)=\frac{1}{n}\sum_{i=1}^n\partial_1 W_2^2(\mu,\nu_i)+\gamma \nabla E(\mu)$. Hence by Lemma \ref{lemma_2}, any $\phi\in\partial J_{\P_n^{\nu}}(\mu)$ is of the form $\phi=\frac{1}{n}\sum_{i=1}^n\phi_i+\gamma\ \nabla E(\mu)$ where for all $i=1,\ldots,n$, $\phi_i=\phi^{\mu,\nu_i}$  is optimal in the sense that $(\phi^{\mu,\nu_i},\psi^{\mu,\nu_i})$ is an optimal couple associated to $(\mu,\nu_i)$ in the Kantorovich formulation of the Wasserstein distance (see Theorem \ref{Kantorovich}). 
Therefore by Lemma \ref{lemma_1}, there exists $\phi=\frac{1}{n}\sum_{i=1}^n\phi^{\mu,\nu_i}+\gamma \nabla E(\mu)$ such that $\langle \phi,\theta-\mu\rangle\ge 0$ for all $\theta\in \PP_{2}(\Omega)$. Likewise, there exists $\breve{\phi}=\frac{1}{n}\sum_{i=1}^n\phi^{\zeta,\eta_i}+\gamma \nabla E(\zeta)$ such that $\langle \breve{\phi},\theta-\zeta\rangle\ge 0$ for all $\theta\in \PP_{2}(\Omega)$. Finally, we obtain 
$$\gamma \langle \nabla E(\mu)-\nabla E(\zeta),\mu-\zeta\rangle \le -\int_{\Omega}\left(\frac{1}{n}\sum_{i=1}^n(\phi^{\mu,\nu_i}-\phi^{\zeta,\eta_i})\right)d(\mu-\zeta).$$
Following the proof of Kantorovich duality's theorem in \cite{villani2003topics}, we can restrict the supremum over $(\phi,\psi)\in C_W$ in Kantorovich's duality Theorem \ref{Kantorovich} to the admissible pairs $(\phi^{cc},\phi^c)$ where
$
\phi^c(y)={\inf}_x\{\vert x-y\vert^2-\phi(x)\}\
$
and
$
\phi^{cc}(x)={\inf}_y\{\vert x -y\vert^2-\phi^c(y)\}.
$
Then, we replace  $\phi^{\mu,\nu_i}$ by $(\phi^{\mu,\nu_i})^{cc}$ (resp. $\phi^{\zeta,\eta_i}$ by $(\phi^{\zeta,\eta_i})^{cc}$ ) and $\psi^{\mu,\nu_i}$ by $(\phi^{\mu,\nu_i})^c$  (resp. $\psi^{\zeta,\eta_i}$ by $(\phi^{\zeta,\eta_i})^c$ ) 
and  obtain 
\begin{align*}
\gamma\langle \nabla E(\mu)-&\nabla E(\zeta),\mu-\zeta\rangle \le -\frac{1}{n}\sum_{i=1}^n\int_{\Omega} \left[(\phi^{\mu,\nu_i})^{cc}(x)-(\phi^{\zeta,\eta_i})^{cc}(x)\right]d(\mu-\zeta)(x)\\
&=-\frac{1}{n}\sum_{i=1}^n  \iint_{\Omega\times\Omega}\left[(\phi^{\mu,\nu_i})^{cc}(x)-(\phi^{\zeta,\eta_i})^{cc}(x)\right]d(\pi^{\mu,\nu_i}-\pi^{\zeta,\eta_i})(x,y),
\end{align*}
where $\pi^{\mu,\nu_i}$ is an optimal transport plan on $\Omega\times\Omega$ with marginals $\mu$ and $\nu_i$ for $i\in\{1,\ldots,n\}$ (and $\pi^{\zeta,\eta_i}$ optimal with marginals $\zeta$ and $\eta_i$). Developing the right-hand side expression in the above inequality, we get
\begin{align*}
\gamma\langle \nabla E(\mu)-&\nabla E(\zeta),\mu-\zeta\rangle\\
&\le -\frac{1}{n}\sum_{i=1}^n\left[\iint(\phi^{\mu,\nu_i})^{cc}(x)d\pi^{\mu,\nu_i}(x,y)+\iint(\phi^{\zeta,\eta_i})^{cc}(x)d\pi^{\zeta,\eta_i}(x,y)\right]\\&\,\,\,+\frac{1}{n}\sum_{i=1}^n\left[\iint(\phi^{\mu,\nu_i})^{cc}(x)d\pi^{\zeta,\eta_i}(x,y)+\iint(\phi^{\zeta,\eta_i})^{cc}(x)d\pi^{\mu,\nu_i}(x,y)\right].
\end{align*}

From the  condition \eqref{eq:condW} in the Kantorovich's dual problem, we have that \linebreak $(\phi^{\mu,\nu_i})^{cc}(x)\le\vert x-y\vert^2 - (\phi^{\mu,\nu_i})^{c}(y)$ and $(\phi^{\zeta,\eta_i})^{cc}(x)\le\vert x-y\vert^2 - (\phi^{\zeta,\eta_i})^{c}(y)$ for all $i\in\{1,\ldots,n\}$. 
Moreover, we have that  $(\phi^{\mu,\nu_i})^{cc}(x)d\pi^{\mu,\nu_i}(x,y)= \left( \vert x-y\vert^2- (\phi^{\mu,\nu_i})^{c}(y)\right)\linebreak d\pi^{\mu,\nu_i}(x,y)$ and likewise $(\phi^{\zeta,\eta_i})^{cc}(x)d\pi^{\zeta,\eta_i}(x,y)= \left(\vert x-y\vert^2- (\phi^{\zeta,\eta_i})^{c}(y)\right)d\pi^{\zeta,\eta_i}(x,y)$. 
We therefore deduce that
\begin{align*}
\gamma\langle \nabla E(\mu)-\nabla E(\zeta),\mu-\zeta\rangle & \le -\frac{1}{n}\sum_{i=1}^n\left[\iint\left(\vert x-y\vert^2-(\phi^{\mu,\nu_i})^{c}(y)\right)d\pi^{\mu,\nu_i}(x,y)\right.\\ 
&+\left. \iint\left(\vert x-y\vert^2-(\phi^{\zeta,\eta_i})^{c}(y)\right)d\pi^{\zeta,\eta_i}(x,y)\right]\\
& +\frac{1}{n}\sum_{i=1}^n\left[\iint\left(\vert x-y\vert^2-(\phi^{\mu,\nu_i})^{c}(y)\right)d\pi^{\zeta,\eta_i}(x,y)\right.\\
&+\left.\iint\left(\vert x-y\vert^2-(\phi^{\zeta,\eta_i})^{c}(y)\right)d\pi^{\mu,\nu_i}(x,y)\right]\\
 & = \frac{1}{n}\sum_{i=1}^n\int_{\Omega}\left[(\phi^{\mu,\nu_i})^{c}(y)-(\phi^{\zeta,\eta_i})^{c}(y)\right]d(\nu_i-\eta_i)(y).
\end{align*}
For all $1\le i\le n$, we have that $(\phi^{\mu,\nu_i})^{c}$ and $(\phi^{\zeta,\eta_i})^{c}$ are $2\diam(\Omega)$-Lipschitz by definition, which implies that $\left[(\phi^{\mu,\nu_i})^{c}-(\phi^{\zeta,\eta_i})^{c}\right]$ is $4\diam(\Omega)$-Lipschitz for all $1\le i\le n$. 
We then conclude
\begin{align*}
\gamma\langle \nabla E(\mu)-&\nabla E(\zeta),\mu-\zeta\rangle \\
& \le \frac{4\diam(\Omega)}{n}\sum_{i=1}^n\sup\left\{\int\phi\ d(\nu_i-\eta_i);\ \phi\in\cap\mathbb{L}^1(\vert\nu_i-\eta_i\vert),\| \phi\|_{Lip}\le 1\right\}\\
& =\frac{4\diam(\Omega)}{n}\sum_{i=1}^n W_1(\nu_i,\eta_i)\le\frac{4\diam(\Omega)}{n}\sum_{i=1}^n W_2(\nu_i,\eta_i),
\end{align*}
by the Kantorovich-Rubinstein theorem presented in \cite{villani2003topics}, while the last inequality above comes from H\"older inequality between the distance $W_2$ and the distance $W_1$ defined  for $\theta_1,\theta_2$  (probability measures on $\Omega$ with moment of order $1$) as
\[W_1(\theta_1,\theta_2)=\underset{\pi}{\inf}\int_{\Omega}\int_{\Omega}\vert x-y\vert d\pi(x,y)\]
where $\pi$ is a probability measure on $\Omega\times\Omega$ with marginals $\theta_1$ and $\theta_2$. 
Since $\mu$ and $\zeta$ are independent, we can assign to $\nu_i$ any $\eta_{\sigma(i)}$ for $\sigma\in\mathcal{S}_n$ the permutation group of $\{1,\ldots,n\}$ to  obtain
$
\gamma\langle \nabla E(\mu)-\nabla E(\zeta),\mu-\zeta\rangle\le\frac{4\diam(\Omega)}{n}\underset{\sigma\in\mathcal{S}_n}{\inf}\sum_{i=1}^n W_2(\nu_i,\eta_{\sigma(i)}),
$
which completes the proof.
\end{proof}

\subsection{Proof of convergence properties}\label{App_conv}
\paragraph{Convergence of $\mu_{\mathbb{P}}^{\gamma}$ towards $\mu_{\mathbb{P}}^{0}$}
\begin{proof}[Proof of Theorem \ref{th:biais}]
By Theorem 2.1.(d) in \cite{braides2006handbook}, $J_{\P}^{\gamma} \ \Gamma$-converges to $J_{\P}^0$ in $2$-Wasserstein metric. Indeed for every sequence $(\mu_{\gamma})_{\gamma}\subset\PP_{2}(\Omega)$ converging to $\mu\in\PP_{2}(\Omega)$,
$$J_{\P}^0(\mu)\le\underset{\gamma\rightarrow 0}{\liminf} \ J_{\P}^{\gamma}(\mu_{\gamma})$$
by lower semicontinuity of $J_{\P}^{\gamma}$ with respect to the $W_2$ metric. Moreover, there exists a sequence $(\mu_{\gamma})_{\gamma}$ converging to $\mu$ (for instance take $(\mu_{\gamma})_{\gamma}$ constant and equal to $\mu$) such that
$\underset{\gamma\rightarrow 0}{\lim}J_{\P}^{\gamma}(\mu_{\gamma})=\underset{\gamma\rightarrow 0}{\lim}J_{\P}^{\gamma}(\mu)=J_{\P}^0(\mu).$
One can also notice that $J_{\P}^{\gamma}:\PP_{2}(\Omega)\rightarrow\R$ is equi-coercive: for all $t\in\R$, the set $\{\nu\in\PP_{2}(\Omega)\ \text{such that}\ J_{\P}^{\gamma}(\nu)\le t\}$ is included in a compact $K_t$ since it is closed in the compact set $\PP_{2}(\Omega)$ (by compactness of $\Omega$). Therefore, we can apply the fundamental theorem of $\Gamma$-convergence (Theorem 2.10 in \cite{braides2006handbook})  in the metric space $(\PP_{2}(\Omega),W_2)$ to obtain the first statement of Theorem \ref{th:biais}.

Let us now use this result to prove the convergence in non-symmetric Bregman divergence of $\mu_{\P}^{\gamma}$ under the assumption that the population Wasserstein barycenter is unique.  By definition \eqref{fP_gamma} of $\mu_{\P}^{\gamma}$, we get that
\begin{equation}
\label{biais1}
\int W_2^2(\mu_{\P}^{\gamma},\nu)d\P(\nu)-\int W_2^2(\mu_{\P}^0,\nu)d\P(\nu)+\gamma(E(\mu_{\P}^{\gamma})-E(\mu_{\P}^0))\le 0,
\end{equation}
and by definition \eqref{fP_0} of $\mu_{\P}^0$, one has that $\int W_2^2(\mu_{\P}^{\gamma},\nu)d\P(\nu)-\int W_2^2(\mu_{\P}^0,\nu)d\P(\nu)\ge0$.
Therefore, one has that $E(\mu_{\P}^{\gamma})-E(\mu_{\P}^0) \leq 0$ and thus, by definition \eqref{eq:nonsymDE} of the non-symmetric Bregman divergence, it follows that
\begin{align*}
D_E(\mu_{\P}^{\gamma},\mu_{\P}^0)&\le\langle\nabla E(\mu_{\P}^0),\mu_{\P}^0-\mu_{\P}^{\gamma}\rangle.
\end{align*}
Since $\nabla E(\mu_{\P}^0)$ is assumed to be a continuous function on the compact set $\Omega$, the above inequality and the fact that $\lim\limits_{\gamma \rightarrow 0} W_2(\mu_{\P}^{\gamma},\mu_{\P}^0)=0$ implies that $\lim\limits_{\gamma \rightarrow 0} D_E(\mu_{\P}^{\gamma},\mu_{\P}^0)= 0$ since convergence of probability measures for the $W_2$ metric implies weak convergence.
\end{proof} 

\paragraph{Convergence of $\bmu_{\mathbb{P}_n}^{\gamma}$ towards $\mu_{\mathbb{P}}^{\gamma}$.}
In what follows, $C$ denotes a universal constant whose value may change from line to line.
\begin{proof}[Proof of Theorem \ref{th:convergence_var}]
We denote by $C$ a universal constant whose value may change from line to line. From the subgradient's inequality \eqref{ineg_subgradient} and following the same process used in the proof of the stability's Theorem \ref{stability}, we have that, for each  $\bnu_i,\ i=1,\ldots,n$, there exists $\phi^{\bmu_{\P_n}^{\gamma},\bnu_i}$ integrable with respect to $\bmu_{\P_n}^{\gamma}(x)dx$ such that for all $\eta\in \PP_{2}(\Omega)$:
\begin{equation}
\label{ineg_cont}
\left\langle\frac{1}{n}\sum_{i=1}^n\phi^{\bmu_{\P_n}^{\gamma},\bnu_i}+\gamma\nabla E(\bmu_{\P_n}^{\gamma}),\eta-\bmu_{\P_n}^{\gamma}\right\rangle\ge 0.
\end{equation}
By applying once again the subgradient's inequality, we get
\begin{align*}
\mu_{\P}^{\gamma} \ \text{minimizes} \ J_{\P}^{\gamma} \ \Leftrightarrow \ & \exists  \phi\in\partial J_{\P}^{\gamma}(\mu_{\P}^{\gamma}) \ \text{s. t.}  \ \langle\phi,\eta-\mu_{\P}^{\gamma}\rangle\ge 0 \ \text{for all} \ \eta\in\PP_{2}(\Omega).
\end{align*}
Let us explicit the form of a subgradient $\phi\in\partial J_{\P}^{\gamma}(\mu_{\P}^{\gamma})$ using again the Theorem of the subdifferential of a sum. We have that $\mu\mapsto W_2^2(\mu,\nu)$ is continuous for all $\nu\in \PP_2(\Omega)$. Moreover by symmetry, $\nu\mapsto W_2^2(\mu,\nu)$ is measurable for all $\mu\in\PP_{2}(\Omega)$ and
$
W_2^2(\mu,\nu) \le\iint\vert x-y\vert^2d\mu(x)d\nu(y)\le 2\int\vert x\vert^2 d\mu(x) + 2\int\vert y\vert^2d\nu(y)\le C 
$
is integrable with respect to $d\P(\nu)$. Hence, by the Theorem of continuity under integral sign, we deduce that $\mu\mapsto\E[W_2^2(\mu,\bnu)]$ is continuous. Thus we can manage the subdifferential of the following sum and one has that $\partial J_{\P}^{\gamma}(\mu_{\P}^{\gamma})=\partial_1[\E(W_2^2(\mu_{\P}^{\gamma},\bnu))]+\gamma \nabla E(\mu_{\P}^{\gamma})$, where $\bnu$ is still a random measure with distribution $\P$. Also the Theorem 23 in \cite{rockafellar1974conjugate} implies
$
\partial_1\E[W_2^2(\mu_{\P}^{\gamma},\bnu)]=\E[\partial_1W_2^2(\mu_{\P}^{\gamma},\bnu)].
$
We sum up as
\begin{equation} \label{ineg_cont2}
\mu_{\P}^{\gamma} \ \text{minimizes} \ J_{\P}^{\gamma}  \Leftrightarrow  \left\langle\int\phi^{\mu_{\P}^{\gamma},\nu}d\P(\nu)+\gamma\nabla E(\mu_{\P}^{\gamma}),\eta-\mu_{\P}^{\gamma}\right\rangle\ge 0,\, \forall \ \eta\in\PP_{2}(\Omega). 
\end{equation}

In the sequel, to simplify the notation, we use $\bmu:=\bmu_{\P_n}^{\gamma}$ and $\eta:=\mu_{\P}^{\gamma}$. Therefore thanks to \eqref{ineg_cont} and  \eqref{ineg_cont2}
\begin{align}
d_E(\bmu,\eta) &=\langle\nabla E(\bmu)-\nabla E(\eta),\bmu-\eta\rangle\nonumber\\
 & \le - \frac{1}{\gamma}\left\langle\frac{1}{n}\sum_{i=1}^n\phi^{\bmu,\bnu_i}-\int\phi^{\eta,\nu}d\P(\nu),\bmu-\eta\right\rangle\label{eq1}\\
\nonumber
& =\frac{1}{\gamma}\left( \frac{1}{n}\sum_{i=1}^n\left[\int\phi^{\bmu,\bnu_i}(x)d\eta(x) -\int\phi^{\bmu,\bnu_i}(x)d\bmu(x) \right]\right.\\
& \hspace{2cm}\left.+\iint\phi^{\eta,\nu}d\P(\nu)d\bmu(x)-\iint\phi^{\eta,\nu}d\P(\nu)d\eta(x) \right).\nonumber
\end{align}
We would like to switch integrals of the two last terms. In that purpose, we use that $\int W_2^2(\eta,\nu) d\P(\nu) <+\infty$, since $\P\in W_2(\P_2(\Omega))$. 

As  $0\le \int W_2^2(\eta,\nu)  d\P(\nu) =\int \left(  \int\phi^{\eta,\nu}(x)d\eta(x)+\\ \int\psi^{\eta,\nu}(x)d\nu(y) \right) d\P(\nu) $, we also have that $\iint\phi^{\eta,\nu}(x)d\eta(x)d\P(\nu)<+\infty$. Since $x\mapsto\phi^{\eta,\nu}(x)$ and $\nu\mapsto\phi^{\eta,\nu}(x)$ are measurables, we obtain by Fubini's theorem
$
\int_{\Omega}\int_{\PP_2(\Omega)}\phi^{\eta,\nu}d\P(\nu)d\eta(x)=\linebreak\int_{\PP_2(\Omega)}\int_{\Omega}\phi^{\eta,\nu}d\eta(x)d\P(\nu).
$
By the same tools, since
\begin{align*}
\int  W_2^2(\bmu,\nu) d\P(\nu) &=  \int \left( \int\phi^{\bmu,\nu}(x)d\bmu(x)+\int\psi^{\bmu,\nu}(x)d\nu(y)\right) d\P(\nu)\\
& \ge \int  \left(\int\phi^{\eta,\nu}(x)d\bmu(x)+\int\psi^{\eta,\nu}(x)d\nu(y)\right) d\P(\nu),
\end{align*}
we get  $\int \left( \int\phi^{\eta,\nu}(x)d\mu(x) \right) d\P(\nu) <+\infty$, so
$
\int_{\Omega}\int_{\PP_2(\Omega)}\phi^{\eta,\nu}d\P(\nu)d\bmu(x)=\linebreak \int_{\PP_2(\Omega)}\int_{\Omega}\phi^{\eta,\nu}d\bmu(x)d\P(\nu).
$

\noindent
Therefore, by the dual formulation of Kantorovich, we have that
\begin{align}
\label{eq2}
-\int\phi^{\bmu,\bnu_i}d\bmu(x) &= \int\psi^{\bmu,\bnu_i}(y)d\bnu_i(y)-\iint\vert x-y\vert^2d\pi^{\bmu,\bnu_i}(x,y)\\
\label{eq3}
-\int\phi^{\eta,\nu}d\eta(x) &= \int\psi^{\eta,\nu}(y)d\nu(y)-\iint\vert x-y\vert^2d\pi^{\eta,\nu}(x,y)
\end{align}
where $\pi^{\bmu,\bnu_i}$ and $\pi^{\eta,\nu}$ are optimal transport plans for the Wasserstein distance. Also, $\phi^{\bmu,\bnu_i}$ and $\phi^{\eta,\nu}$ verify the Kantorovich condition, that is
\begin{align}
\label{eq4}
\phi^{\bmu,\bnu_i}(x)&\le-\psi^{\bmu,\bnu_i}(y)+\vert x-y\vert^2\\
\label{eq5}
\phi^{\eta,\nu}(x)&\le-\psi^{\eta,\nu}(y)+\vert x-y\vert^2.
\end{align}

Next, the trick is to write $\int\phi^{\bmu,\bnu_i}(x)d\eta(x)=\iint\phi^{\bmu,\bnu_i}(x)d\pi^{\eta,\bnu_i}(x,y)$ and \linebreak $\int\phi^{\eta,\nu}(x)d\bmu(x)=\iint\phi^{\eta,\nu}(x)d\pi^{\bmu,\nu}(x,y)$. Thus, by using the equalities \eqref{eq2}, \eqref{eq3} and the inequalities \eqref{eq4} and \eqref{eq5}, the result \eqref{eq1} becomes
\begin{equation}
\begin{split}
\label{ref_ineq}
\gamma d_E(\bmu,\eta) \le  &-\frac{1}{n}\sum_{i=1}^n\iint\vert x-y\vert^2d\pi^{\bmu,\bnu_i}(x,y)+\frac{1}{n}\sum_{i=1}^n\iint\vert x-y\vert^2d\pi^{\eta,\bnu_i}(x,y) \\
&  +\int\iint\vert x-y\vert^2d\pi^{\bmu,\nu}(x,y)d\P(\nu) - \int\iint\vert x-y\vert^2d\pi^{\eta,\nu}(x,y)d\P(\nu).
\end{split}
\end{equation}
We denote
\begin{align}
S_{\bmu_{\P_n}^{\gamma}}^n &:=\int \iint\vert x-y\vert^2d\pi^{\bmu_{\P_n}^{\gamma},\nu}(x,y) d\P(\nu)  \ -\frac{1}{n}\sum_{i=1}^n\iint\vert x-y\vert^2d\pi^{\bmu_{\P_n}^{\gamma},\bnu_i}(x,y)  \label{def:Snn} \\
S_{\mu_{\P}^{\gamma}}^n &:=\frac{1}{n}\sum_{i=1}^n\iint\vert x-y\vert^2d\pi^{\mu_{\P}^{\gamma},\bnu_i}(x,y) - \E\left(\iint\vert x-y\vert^2d\pi^{\mu_{\P}^{\gamma},\bnu}(x,y)\right)\label{def:Sn},
\end{align}
and the previous inequality \eqref{ref_ineq} finally  writes
\begin{equation}
\label{ineg1}
\gamma d_E(\bmu_{\P_n}^{\gamma},\mu_{\P}^{\gamma})\le S_{\bmu_{\P_n}^{\gamma}}^n+S_{\mu_{\P}^{\gamma}}^n.
\end{equation}

\noindent
Taking the expectation with respect to the random measures, \eqref{ineg1} implies
\begin{equation}
\label{variance}
\gamma^2\E(d_E^2(\bmu_{\P_n}^{\gamma},\mu_{\P}^{\gamma}))\le 2\E(\vert S_{\bmu_{\P_n}^{\gamma}}^n\vert^2)+2\E(\vert S_{\mu_{\P}^{\gamma}}^n\vert^2).
\end{equation}

The first term related to ${\mu_{\P}^{\gamma}}^n$ is easy to handle, since for $i=1,\ldots,n$ the random variables $\iint\vert x-y\vert^2d\pi^{\mu_{\P}^{\gamma},\bnu_i}(x,y)$ are independent and identically distributed. From the law of large numbers, we can notice that $S_{\mu_{\P}^{\gamma}}^n\longrightarrow 0$ almost surely when $n\rightarrow +\infty$. In particular, we observe that
\begin{equation}
\label{var_carre}
\E\left(\vert S_{\mu_{\P}^{\gamma}}^n\vert^2\right)=\frac{1}{n}\text{Var}\left(\iint\vert x-y\vert^2d\pi^{\mu_\P^{\gamma},\bnu}(x,y)\right)\leq\frac{C}{n}.
\end{equation}
\noindent
Let us now study $\E(\vert S_{\bmu_{\P_n}^{\gamma}}^n\vert^2)$ thanks to the empirical process theory. We recall that  the class of functions $\mathcal{H}$ on $\PP_2(\Omega)$ is defined as 
\begin{equation} \label{eq:classH}
\mathcal{H}=\{h_{\mu}:\nu\in\PP_2(\Omega)\mapsto W_2^2(\mu,\nu)\in\R;\mu\in\PP_2(\Omega)\}
\end{equation}
ant its associated norm is  $\Vert G\Vert_{\mathcal{H}}:={\sup}_{h\in\mathcal{H}}\vert G(h)\vert$ where $G:\mathcal{H}\rightarrow\R$. 

\noindent
Therefore we obtain
\begin{align}
S_{\bmu_{\P_n}^{\gamma}}^n 
&= \int_{\PP_2(\Omega)} h_{\bmu_{\P_n}^{\gamma}}(\nu)d\P(\nu)-\int_{\PP_2(\Omega)} h_{\bmu_{\P_n}^{\gamma}}(\nu)d\P_n(\nu):=(\P-\P_n)(h_{\bmu_{\P_n}^{\gamma}})\\
& \le \underset{h\in\mathcal{H}}{\sup}\vert(\P-\P_n)\left(h\right)\vert. \label{S_n_gamma}
\end{align}
We define the envelope function of $\mathcal{H}$ by 
$$H:\nu\in\PP_2(\Omega)\mapsto{\sup}_{\mu\in\PP_2(\Omega)}\{W_2(\mu,\nu);W_2^2(\mu,\nu)\},$$ which is integrable with respect to $\P$ by compacity of $\Omega$.
Let then $\mathcal{H}_M$ be the class of functions $\tilde{h}_{\mu}:= h_{\mu}\mathds{1}_{H\leq M}$ when $h_{\mu}$ ranges over $\mathcal{H}$. By the triangle reverse inequality, we have for $\tilde{h}_{\mu},\tilde{h}_{\mu'}\in\mathcal{H}_M$
\begin{align*}
\Vert \tilde{h}_{\mu}-\tilde{h}_{\mu'} \Vert_{\mathbb{L}_1(\P_n)}&=\frac{1}{n}\sum_{i=1}^n\vert W_2(\mu,\nu_i)- W_2(\mu',\nu_i)\vert \ (W_2(\mu,\nu_i)- W_2(\mu',\nu_i))\mathds{1}_{H\leq M}\\
&\leq W_2(\mu,\mu')\frac{2}{n}\sum_{i=1}^n H(\nu_i)\mathds{1}_{H\leq M} \leq 2M W_2(\mu,\mu').
\end{align*}
We deduce that $N(\varepsilon,\mathcal{H}_M,\mathbb{L}_1(\P_n))\leq N(\frac{\varepsilon}{2M},K_M,W_2)$ where $K_M=\{\mu\in\PP_2(\Omega)\}$ is compact. Then from Borel-Lebesgue, we deduce that $\log N(\varepsilon,\mathcal{H}_M,\mathbb{L}_1(\P_n))$ can be bounded from above by a finite number which does not depend on $n$. Theorem 2.4.3 in \cite{van1996weak} allows us to conclude that $\vert S_{\bmu_{\P_n}^{\gamma}}^n\vert$ tends to $0$ almost surely. By the mapping theorem, $\vert S_{\bmu_{\P_n}^{\gamma}}^n\vert^2$ also tends to $0$ a.s. Since it is bounded by a constant dependant only on the diameter of $\Omega$, we have that it is bounded by an integrable function. By the theorem of dominated convergence, we get $\E\left(\vert S_{\bmu_{\P_n}^{\gamma}}^n\vert^2\right)\underset{n\rightarrow \infty}{\longrightarrow} 0.$ Gathering \eqref{variance}, \eqref{var_carre}, we get for all $\gamma>0$
$$\E(d_E^2(\bmu_{\P_n}^{\gamma},\mu_{\P}^{\gamma}))\ \underset{n\rightarrow\infty}{\longrightarrow} 0.$$
\end{proof}
\noindent
{\bf Rate of convergence between $\bmu_{\mathbb{P}_n}^{\gamma}$ and $\mu_{\mathbb{P}}^{\gamma}$}
In order to achieve a rate a convergence, we will need existing results on the notion of bracketing number defined below.
\begin{defi}
Given two real-valued functions $l$ and $r$, the bracket $[l,r]$ is the set of all functions $f$ with 
$l \le f\le r$. An $\epsilon$-bracket is a bracket $[l,r]$ with $\Vert l-r \Vert < \epsilon$. The bracketing number $N_{[]}(\epsilon,\mathcal{F},\Vert \cdot\Vert)$ is the minimum number of $\epsilon$-brackets needed to cover $\mathcal{F}$.
\end{defi}

\begin{proof}[Proof of Theorem \ref{rate_convergence}]
This proof follows on from the proof of Theorem \ref{th:convergence_var}. We recall from \eqref{variance}
\begin{equation}
\gamma^2\E(d_E^2(\bmu_{\P_n}^{\gamma},\mu_{\P}^{\gamma}))\le 2\E(\vert S_{\bmu_{\P_n}^{\gamma}}^n\vert^2)+2\E(\vert S_{\mu_{\P}^{\gamma}}^n\vert^2), \ \mbox{for} \ S_{\bmu_{\P_n}^{\gamma}}^n,S_{\mu_{\P}^{\gamma}}^n \ \mbox{defined in} \ \eqref{def:Snn},\eqref{def:Sn}
\end{equation}
where by \eqref{var_carre}, $\E\left(\vert S_{\mu_{\P}^{\gamma}}^n\vert^2\right)\leq\frac{C}{n}$ and by \eqref{S_n_gamma} we have for $\mathcal{H}$ given in \eqref{eq:classH} 
$$\vert S_{\bmu_{\P_n}^{\gamma}}^n \vert \leq \underset{h\in\mathcal{H}}{\sup}\vert(\P-\P_n)\left(h\right)\vert.$$
Rewritting this term, we get $\vert S_{\bmu_{\P_n}^{\gamma}}^n \vert \leq \frac{1}{\sqrt{n}}\Vert\G_n\Vert_{\mathcal{H}}$ where $\G_n(h) =\sqrt{n}(\P_n-\P)(h) $. We  obtain
\begin{equation}
\label{ineg2}
\E\left(\vert S_{\bmu_{\P_n}^{\gamma}}^n\vert^2\right)\le\frac{1}{n}\E\left(\Vert\G_n\Vert_{\mathcal{H}}^2\right)
=\frac{1}{n}\Vert\ \Vert\G_n\Vert_{\mathcal{H}}\Vert_{\mathbb{L}_2
(\P)}^2.
\end{equation}
We then use  the following Theorem 2.14.1. of \cite{van1996weak}  to control the last term in \eqref{ineg2}.
\begin{thm}
Let $\mathcal{H}$ be a $Q$-measurable class of measurable functions with measurable envelope function $H$. Then for $p\ge 1$,
\begin{equation}
\label{entropy}
\Vert\ \Vert\G_n\Vert_{\mathcal{H}}\Vert_{\mathbb{L}_p
(Q)}\le CI(1,\mathcal{H})\Vert H\Vert_{\mathbb{L}_{2\vee p}(Q)}
\end{equation}
with $C$ a constant, $I(1,\mathcal{H})$ defined in \eqref{def_metric_entropy} and $H$ an envelope function.
\end{thm}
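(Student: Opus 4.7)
The plan is to establish this maximal inequality by combining the two workhorses of empirical process theory: symmetrization and chaining (Dudley's entropy integral), following the proof in van der Vaart and Wellner. Throughout, $X_1,\ldots,X_n$ are iid from $Q$ and $\G_n(h)=n^{-1/2}\sum_{i=1}^n(h(X_i)-Q h)$. The first step is to pass from the centred process to a symmetrized one: introducing an independent Rademacher sequence $(\varepsilon_i)_i$, the standard symmetrization lemma (obtained via an independent copy of the sample and Jensen's inequality) gives
\[
\bigl\|\,\|\G_n\|_\mathcal{H}\bigr\|_{\mathbb{L}_p(Q)} \le 2\,\Bigl\|\, \Bigl\|n^{-1/2}\sum_{i=1}^n\varepsilon_i\, h(X_i)\Bigr\|_\mathcal{H}\Bigr\|_{\mathbb{L}_p(Q\otimes\P_\varepsilon)}.
\]

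The second step is a chaining bound, carried out conditionally on $(X_i)_i$. Conditional on the data, $h\mapsto n^{-1/2}\sum_i\varepsilon_i h(X_i)$ is a sub-Gaussian process for the random seminorm $\rho_n(h,h')=\|h-h'\|_{\mathbb{L}_2(\P_n)}$, where $\P_n=n^{-1}\sum_i\delta_{X_i}$, thanks to Hoeffding's inequality. Generic chaining (or the classical Dudley bound) then yields, for some universal $C>0$,
\[
\E_\varepsilon\Bigl\|n^{-1/2}\sum_{i=1}^n\varepsilon_i\, h(X_i)\Bigr\|_\mathcal{H}
\le C\int_0^{D_n}\sqrt{\log N(\eta,\mathcal{H},\mathbb{L}_2(\P_n))}\, d\eta,
\]
with $D_n\le 2\|H\|_{\mathbb{L}_2(\P_n)}$, since $H$ is an envelope. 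Rescaling via $\eta=u\|H\|_{\mathbb{L}_2(\P_n)}$ and using that $N$ is nonincreasing in its first argument, the right-hand side is bounded by
\[
C\,\|H\|_{\mathbb{L}_2(\P_n)}\int_0^{1}\sqrt{1+\log N(u\|H\|_{\mathbb{L}_2(\P_n)},\mathcal{H},\mathbb{L}_2(\P_n))}\, du
\le C\,\|H\|_{\mathbb{L}_2(\P_n)}\, I(1,\mathcal{H}),
\]
the last inequality being precisely the definition \eqref{def_metric_entropy}, where the supremum over discrete measures $Q$ absorbs the random choice $Q=\P_n$.

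The third step combines these ingredients. For $p=1$, integrating the conditional bound over $(X_i)_i$ and using Jensen's inequality gives $\E\|H\|_{\mathbb{L}_2(\P_n)}\le\|H\|_{\mathbb{L}_2(Q)}$, so that $\E\|\G_n\|_\mathcal{H}\le C\,I(1,\mathcal{H})\,\|H\|_{\mathbb{L}_2(Q)}$. For $p>1$ the same conditional bound is used, but one must take the $\mathbb{L}_p$ norm of $\|H\|_{\mathbb{L}_2(\P_n)}$ in the outer expectation; bounding $\|H\|_{\mathbb{L}_2(\P_n)}^p$ by $n^{-1}\sum_i H(X_i)^p$ when $p\ge 2$ (or by Jensen when $p<2$), one obtains the $\mathbb{L}_{2\vee p}(Q)$ norm of $H$ on the right, completing the inequality.

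The main technical obstacle is the chaining step: one must produce a tail bound of the form $\P_\varepsilon(\sup_h|\sum_i\varepsilon_i h(X_i)|\ge t)\le 2\exp(-t^2/(2n\|H\|_{\mathbb{L}_2(\P_n)}^2))$ for the relevant increments, and then iterate across dyadic scales while keeping track of the approximating nets. The clever point is that, once the chaining is performed at the level of the random metric $\mathbb{L}_2(\P_n)$, the supremum built into the definition of $I(1,\mathcal{H})$ is exactly what removes the dependence on $\P_n$ after deconditioning, yielding a deterministic bound driven only by the intrinsic metric entropy of $\mathcal{H}$ and the envelope $H$.
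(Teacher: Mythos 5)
This statement is not proved in the paper at all: it is quoted verbatim as Theorem~2.14.1 of van der Vaart and Wellner \cite{van1996weak} and used as a black box to control $\E(\Vert \G_n\Vert_{\mathcal{H}}^2)$ in the proof of Theorem~\ref{rate_convergence}. So there is no internal proof to compare against; what you have written is a reconstruction of the standard proof from the cited reference, and as a reconstruction it is essentially correct: symmetrization by Rademacher variables, conditional sub-Gaussianity of the symmetrized process for the random seminorm $\Vert\cdot\Vert_{\mathbb{L}_2(\P_n)}$, Dudley's entropy integral rescaled by $\Vert H\Vert_{\mathbb{L}_2(\P_n)}$ so that the supremum over discrete measures in the definition \eqref{def_metric_entropy} absorbs the dependence on the data, and finally the power-mean/Jensen argument that turns $\E\bigl(\Vert H\Vert_{\mathbb{L}_2(\P_n)}^p\bigr)$ into $\Vert H\Vert_{\mathbb{L}_{2\vee p}(Q)}^p$, which correctly explains the $2\vee p$ in the statement.

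One step is glossed over: for $p>1$ you cannot simply integrate the bound on the conditional \emph{expectation} $\E_\varepsilon\Vert n^{-1/2}\sum_i\varepsilon_i h(X_i)\Vert_{\mathcal{H}}$; you need a bound on the conditional $p$-th moment (or, as in the reference, on the conditional Orlicz $\psi_2$-norm of the supremum), which is what the chaining argument actually delivers once the sub-Gaussian tail bound you mention is iterated across the dyadic nets. You allude to exactly this tail bound in your final paragraph, so the ingredient is present, but the deduction ``conditional $L_1$ bound $\Rightarrow$ unconditional $L_p$ bound'' as literally written in your third step does not follow; it should read ``conditional $\psi_2$ (hence $L_p$) bound $\Rightarrow$ unconditional $L_p$ bound.'' With that correction the argument is the one in the source the paper cites. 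Note also that for the paper's application only $p=2$ and the almost trivial observation that $\mathcal{H}$ is uniformly bounded on a compact $\Omega$ are used, so nothing in the paper hinges on the finer points of the $p>1$ case.
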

Gathering the results of \eqref{variance}, \eqref{var_carre}, \eqref{ineg2} and \eqref{entropy}, we get
\begin{equation}
\E(d_E^2\left(\bmu_{\P_n}^{\gamma},\mu_{\P}^{\gamma})\right)\le \frac{1}{\gamma^2n}\left(C+CI(1,\mathcal{H})\Vert H\Vert_{\mathbb{L}_2(\P)}\right)
\end{equation}
which is completly valid for any $\Omega$ compact in $\R^d$. The norm $\Vert H\Vert_{\mathbb{L}_2(\P)}$ is clearly finite since for all $\nu\in \PP_2(\Omega)$, $\vert h_{\mu}(\nu)\vert\leq 4c_{\Omega}^2$, with $c_{\Omega}^2=\underset{x\in \Omega}{\sup}\vert x\vert^2$. 
\end{proof}

\begin{proof}[Proof of the Theorem \ref{th:rate1D}]
We assume here that $\Omega\subset\R$ compact. It remains to study the term $I(1,\mathcal{H})$ defined in \eqref{def_metric_entropy} for $\mathcal{H}$  in \eqref{eq:classH}.
By the triangle reverse inequality, we have 
\begin{align*}
\vert h_{\mu}(\nu)-h_{\mu'}(\nu)\vert=& \ \vert W_2(\nu,\mu)-W_2(\nu,\mu')\vert \ (W_2(\nu,\mu)+W_2(\nu,\mu')) 
 \le \  W_2(\mu,\mu') \ 2 H(\nu).
\end{align*}
Then, from Theorem 2.7.11 in \cite{van1996weak}, and since Theorem 4 in \cite{kolmogorov1959varepsilon} allows us to bound the metric entropy by the bracket entropy, we get
\begin{align}
\log N(\epsilon \Vert H\Vert_{\mathbb{L}_2(Q)},\mathcal{H},\Vert \cdot\Vert_{\mathbb{L}_2(Q)})\le &
\log N_{[]}(\epsilon \Vert H\Vert_{\mathbb{L}_2(Q)},\mathcal{H},\Vert \cdot\Vert_{\mathbb{L}_2(Q)})\nonumber \\
\le & \log N(\epsilon,\PP_2(\Omega),W_2) \le \log N_{[]}(\epsilon,\PP_2(\Omega),W_2) \label{metric_entrop}.
\end{align}
Also,  for $d=1$, we have
\begin{equation}
\label{ineg}
W_2(\mu,\mu')  =\left(\int_0^1\vert F_{\mu}^-(t)-F_{\mu'}^-(t)\vert^2dt\right)^{1/2}= \Vert F_{\mu}^--F_{\mu'}^-\Vert_{\mathbb{L}_2([0,1])}
\end{equation}
where $F_{\mu}^-$ is the quantile function of the cumulative distribution function $F_{\mu}$ of $\mu$. We denote by $\mathcal{G}=\{F_{\mu}^-,\mu\in \PP_2(\Omega)\}$ the class of quantile functions of probability measures $\mu$ in $\PP_2(\Omega)$, which are monotonic functions. Moreover, we can observe that $F_{\mu}^-:[0,1]\rightarrow[F_{\mu}^-(0),F_{\mu}^-(1)]\subseteq \Omega$, where $\Omega$ is a compact included in $\R$. Hence, $\mathcal{G}$ is uniformly bounded, say by a constant $M > 0$. By Theorem 2.7.5. of \cite{van1996weak} concerning the bracket entropy of the class of monotonic functions, we obtain that
$
\log N_{[]}(\epsilon,\mathcal{G},\mathbb{L}_2[0,1])\le \frac{CM}{\epsilon},
$
for some constant $C > 0$. Finally, from relations \eqref{metric_entrop} and \eqref{ineg}, we can deduce that
$$I(1,\mathcal{H})=\underset{Q}{\sup}\int_0^1\sqrt{1+\log N(\epsilon\Vert H\Vert_{\mathbb{L}_2(Q)},\mathcal{H},\mathbb{L}_2(Q))}d\epsilon \le\int_0^1\sqrt{1+\frac{CM}{\epsilon}}d\epsilon<\infty.$$
\end{proof} 

\begin{proof}[Proof of the Theorem \ref{th:rateSobolev}]
We here consider that  $\Omega$ is a compact of $\R^d$ and $E$ is given by \eqref{Sobolev_penalty}. Let us begin by underlining that since the norm of a Sobolev space is weakly$^*$ lower semicontinuous, $E$ is indeed lower semicontinuous for the Wasserstein metric. Supposing that $\Omega$ has a $C^1$ boundary, we have by the Sobolev embedding theorem that $H^k(\Omega)$ is included in the H\"older space $C^{m,\beta}(\bar{\Omega})$ for any integer $m$ and $\beta \in ]0,1]$ satisfying $m + \beta = k-d/2$. Hence, the densities of $\bmu_{\P_n}^{\gamma}$ and $\mu_{\P}^{\gamma}$ given by \eqref{fPn_gamma} and \eqref{fP_gamma} belong to $C^{m,\beta}(\bar{\Omega})$.

\noindent
From the Theorem \ref{rate_convergence}, we will use that:
 $$\E(d_E^2\left(\bmu_{\P_n}^{\gamma},\mu_{\P}^{\gamma})\right)\le \frac{1}{\gamma^2n}\left(C+CI(1,\mathcal{H})\Vert H\Vert_{\mathbb{L}_2(\P)}\right).$$
Arguing similarly, we have $\Vert H\Vert_{\mathbb{L}_2(Q)}<\infty$, where $H(\nu) = \underset{\mu\in\DD(E)}{\sup}\{W_2(\mu,\nu);W_2^2(\mu,\nu)\}$ where $\DD(E)$ is defined by \eqref{eq:domE}. Thus, instead of controlling the metric entropy $N(\epsilon \Vert H\Vert_{\mathbb{L}_2(Q)},\mathcal{H},\Vert \cdot\Vert_{\mathbb{L}_2(Q)})$, it is enough to bound the metric entropy  $N(\epsilon,\DD(E),W_2)$ thanks to Theorem 2.7.11 in \cite{van1996weak}.

To this end, since $\mu,\mu'\in\DD(E)$ are a.c.\ measures, one has that 
$$
W_2(\mu,\mu')\le \left(\int_{\Omega}\vert T(x)-T'(x)\vert^2 dx\right)^{1/2} \mbox{where}\ T\#\lambda^d=\mu \ \mbox{and} \ T'\#\lambda^d=\mu',
$$
with $\lambda^d$ denoting the Lebesgue measure on $\Omega$. Thanks to Theorem 3.3 in \cite{de2014monge} on the regularity of optimal maps (results initally due to Caffarelli, \cite{caffarelli1992} and \cite{caffarelli1996}), the coordinates of $T$ and $T'$ are $C^{m+1,\beta}(\bar{\Omega})$ functions $\lambda^d-a.e$. Thus, we can bound $N(\epsilon,\DD(E),W_2)$ by the bracket entropy  $N_{[]}(\epsilon,C^{m+1,\beta}(\bar{\Omega}),\mathbb{L}_2(\Omega))$ since $\vert T(x)-T'(x)\vert^2\linebreak =\sum_{j=1}^d\vert T_j(x_j)-T'_j(x_j)\vert^2$ where $T_j,T'_j:\Omega\rightarrow\R$. Now, by Corollary 2.7.4 in \cite{van1996weak},  
$$ \log N_{[]}(\epsilon,C^{m+1,\beta}(\bar{\Omega}),\mathbb{L}_2(\Omega))\le K\left(\frac{1}{\epsilon}\right)^V$$ for any $V\ge d/(m+1)$. Hence, as soon as $V/2<1$ (for which the condition $k>d-1$ is sufficient if $V =  d/(m+1)$), the upper bound in \eqref{rate_conv1} is finite for $\mathcal{H}=\{h_{\mu}:\nu\in\PP_2(\Omega)\mapsto W_2^2(\mu,\nu)\in\R;\mu\in\DD(E)\} $, which yields the result of Theorem \ref{th:rateSobolev} by finally following the arguments in the proof of Theorem \ref{th:rate1D} and since $d_{E_G}\leq d_E$. 
\end{proof}

\bibliographystyle{alpha}

\end{document}